\newcolumntype{C}[1]{>{\centering\arraybackslash}m{#1}}
\theoremstyle{definition}
\newtheorem{theorem}{Theorem}[section]
\newtheorem{corollary}[theorem]{Corollary}
\newtheorem{lemma}[theorem]{Lemma}
\newtheorem{definition}[theorem]{Definition}
\newtheorem{proposition}[theorem]{Proposition}
\newtheorem{remark}[theorem]{Remark}
\newtheorem{example}[theorem]{Example}
\newcommand\qbin[3]{\left[\begin{matrix} #1 \\ #2 \end{matrix} \right]_{#3}}
\begin{document}

\title{Constructions of new Matroids and Designs over~$\mathbb{F}_q$}
\author{Eimear Byrne \and Michela Ceria \and Sorina Ionica \and Relinde Jurrius \and Elif Sa\c{c}\i kara}

\maketitle

\abstract{A perfect matroid design (PMD) is a matroid whose flats of the same rank all have the same size. In this paper we introduce the $q$-analogue of a PMD and its properties. In order to do so, we first establish a new cryptomorphic definition for $q$-matroids. We show that $q$-Steiner systems are examples of $q$-PMD's and we use this $q$-matroid structure to construct subspace designs from $q$-Steiner systems. 
{We apply this construction to the only known $q$-Steiner system, which has parameters $S(2,3,13;2)$, and hence establish the existence of a new subspace design with parameters $2$-$(13,4,5115;2)$.}
}

\section{Introduction}
In combinatorics, we often describe a $q$-analogue of a concept or theory to be any generalization that replaces finite sets by finite dimensional vector spaces.
Two classical topics in combinatorics that have recently been studied as $q$-analogues are matroids and designs. These objects and some of the connections between them are the main focus of this paper.

A subspace design (also called a $q$-design, or a design over
$\mathbb{F}_q$) is a $q$-analogue of a design. A $t$-$(n,k,\lambda;q)$ subspace design is a collection $\mathcal{B}$ of $k$-dimensional subspaces of an $n$-dimensional
$\mathbb{F}_q$-vector space $V$ with the property that every $t$-dimensional subspace of $V$ is contained in exactly $\lambda$ of the members of $\mathcal{B}$. 
Explicit constructions of subspace designs have proved so far to be more elusive than their classical counterparts. Early papers by Thomas, Suzuki, and Itoh have provided some examples of infinite families of subspace designs \cite{thomas,suzuki2design92,itoh}, while in \cite{largesets} an approach to the problem using {\em large sets} is given. A $q$-analogue of the Assmus-Mattson theorem gives a general construction of subspace designs from coding theory \cite{byrne2019assmus}. 
Further sporadic examples have been found by assuming a prescribed automorphism group of the subspace design \cite{braun2018q}.
For the special case $\lambda=1$ we call such a design a $q$-Steiner system and write $S(t,k,n;q)$. The actual existence of an $S(t,k,n;q)$ Steiner system for $t>1$, was established for the first time when $S(2,3,13;2)$ designs were discovered by Braun \emph{et al}~\cite{qSteiner_exists_2016}. 
No other examples have been found to date. The smallest open case is that of the $S(2,3,7;q)$ Steiner system, also known as the $q$-analogue of the Fano plane.

While subspace designs have been intensively studied over the last decade \cite{braun2018q}, $q$-analogues of matroids have more recently appeared in the literature \cite{JP18,gorla2019rank}.
In fact, the $q$-matroid defined in \cite{JP18} was a re-discovery of a combinatorial object already studied by Crapo~\cite{crapo1964theory}. Classical matroids are a generalisation of several ideas in combinatorics, such as independence in vector spaces and trees in graph theory. One of the important properties of matroids is that there are equivalent, yet seemingly different ways to define them: in terms of their independent sets, flats, circuits, bases, closure operator and rank function. We call these equivalent definitions {\em cryptomorphisms}. Cryptomorphisms for $q$-matroids between independent subspaces, the rank function, and bases were established in ~\cite{JP18}. In \cite{BCJ17} the cryptomorphism via bi-colouring of the subspace lattice is discussed. In \cite{bcj} several cryptomorphisms were shown to hold, namely those with respect to dependent spaces, circuits, the closure function, hyperplanes, open spaces etc. In this paper we also give a cryptomorphic description of a $q$-matroid in terms of its flats.
In the classical case, there is a link between designs and matroids, given by the so-called {\em perfect matroid designs} (PMDs). PMDs are  matroids for which flats of the same rank have the same cardinality. They were studied by Murty and others in ~\cite{murty1970equicardinal} and ~\cite{Young}, who showed in particular that  Steiner systems are among the few examples of PMDs and, more importantly, that they could be applied to construct new designs. In this paper we obtain $q$-analogues of some of these results.

First, we extend the theory of $q$-matroids to include a new cryptomorphism, namely that between flats and the rank function.
We apply this cryptomorphism to obtain the first examples of $q$-PMDs; in particular we show that $q$-Steiner systems are $q$-PMDs. Secondly, using the $q$-matroid structure of the $q$-Steiner system, we derive new subspace designs. This leads in some cases to designs with parameters not previously known. Interestingly, some of the parameters of the designs we obtain from the putative $q$-Fano plane coincide with those obtained by Braun \textit{et al}~\cite{braun2005systematic}. By characterising {the group of automorphisms} of the designs that we obtained from our $q$-PMD construction, we show that the subspace designs of \cite{braun2005systematic} cannot be derived from the $q$-Fano plane via our construction.

This paper is organised as follows. After some preliminary notions in Section \ref{sec-preliminaries}, we prove in Section \ref{sec-cryptomorphisms} the above-mentioned new cryptomorphism for $q$-matroids. An overview of the different (but equivalent) ways to define $q$-matroids is found at the end of this section. In Section \ref{qPMD} we prove that $q$-Steiner systems are examples of the $q$-analogue of a perfect matroid design. We use this to derive new designs from the $q$-Steiner system, using its $q$-matroid structure and its flats, independent spaces, and circuits. Finally, we characterize the automorphism groups of these new $q$-designs in terms of the automorphisms of $q$-Steiner systems from which they are constructed.

\section{Preliminaries}\label{sec-preliminaries}
In this section, we bring together certain fundamental definitions on lattices, $q$-matroids and subspace designs, respectively.
Throughout the paper, $\mathbb{F}_q$ will denote the finite field of $q$ elements, 
$n$ will be a fixed positive integer and $E$ will denote the
$n$-dimensional vector space $\mathbb{F}_q^n$.

\subsection{Lattices}
Let us first recall preliminaries on lattices. The reader is referred to Stanley \cite{stanley:1997} or Aigner \cite{aigner1979combinatorial} for further details. 

\begin{definition}
	Let $(\mathcal{L}, \leq)$ be a partially ordered set. 
	Let $a,b,v \in \mathcal{L}$.
	We say that $v$ is an {\bf upper bound} of $a$ and $b$ if $a\leq v$ and $b\leq v$ and furthermore, we say
	that $v$ is a {\bf least upper bound} if $v \leq u$ for any $u \in  \mathcal{L} $ that is also an upper bound of $a$ and $b$.  
	If a least upper bound of $a$ and $b$ exists, then it is unique, is denoted by $a\vee b$ and called the {\bf join} of $a$ and $b$.
	We analogously define {\bf a lower bound} and {\bf the greatest lower bound} of $a$ and $b$ and denote the unique greatest lower bound
	of $a$ and $b$ by $a\wedge b$, which is called the {\bf meet} of $a$ and $b$.
	The poset $\mathcal{L}$ is called a {\bf lattice} if each pair of elements has a
	least upper bound and greatest lower bound and denoted by $(\mathcal{L}, \leq, \vee, \wedge)$.
\end{definition}	

Of particular relevance to this paper is the subspace lattice $(\mathcal{L}(E), \leq, \vee, \wedge)$, which is the lattice of $\mathbb{F}_q$- subspaces of $E$, ordered with respect to inclusion and for which the join of a pair of subspaces is their vector space sum and the meet of a pair of subspaces is their intersection. That is, for all subspaces $A,B \subseteq E$ we have:
$$ A \leq B \Leftrightarrow A \subseteq B, A \vee B = A + B, A \wedge B = A \cap B.$$

\begin{definition}
	Let $(\mathcal{L}, \leq, \vee, \wedge)$ be a lattice and let $a,b\in\mathcal{L}$ with $a\leq b$ but $a\neq b$, we say that $b$ {\bf covers} $a$ if for all $c\in \mathcal{L}$ we have that $a\leq c \leq b$ implies that $c=a$ or $c=b$. A {\bf chain} of length $r$ between two elements $a,b\in\mathcal{L}$ is a sequence of distinct elements $a_0,a_1,\ldots,a_r$ in $\mathcal{L}$ such that $a=a_0\leq a_1\leq\cdots\leq a_r=b$. If $a_{i+1}$ covers $a_i$ for all $i$, we call this a {\bf maximal chain}.
\end{definition}

\begin{definition}
	Let $(\mathcal{L}, \leq, \vee, \wedge)$ be a finite lattice. We say that $\mathcal{L}$ is a {\bf semimodular lattice} if it has the property that if $a$ covers $a\wedge b$ then $a\vee b$ covers $b$.
\end{definition}

\begin{definition}
{A lattice $\mathcal{L}$ is called {\bf geometric} if it is
\begin{enumerate}
    \item{atomic (every element is a supremum of the elements covering the unique minimal),}
    \item{semimodular,}
    \item{without infinite chains.}
\end{enumerate}}
\end{definition}

\begin{definition}
	A bijection $\phi: \mathcal{L} \to \mathcal{L}$ on a lattice $(\mathcal{L}, \leq, \vee, \wedge)$ is called an {\bf automorphism} of 
	$\mathcal{L}$ if one of the following equivalent conditions holds for all $a, b \in \mathcal{L}$:
	\begin{enumerate}
		\item{$a \leq b$ iff $\phi(a)\leq \phi(b)$,}
		\item{$\phi(a \vee b)= \phi(a) \vee \phi(b)$,}
		\item{$\phi(a \wedge b)= \phi(a) \wedge \phi(b)$.}
	\end{enumerate}
\end{definition}

\subsection{$q$-Matroids}

{The general framework of defining matroid-like structures over modular complemented lattices is treated in \cite{crapo1964theory}. Important examples of complemented modular lattices are the Boolean lattice, resulting in classical matroids, and the subspace lattice, leading to $q$-matroids.}

For background on the theory of matroids we refer the reader to \cite{gordonmcnulty} or \cite{oxley}.
For the $q$-analogue of a matroid we follow the treatment of Jurrius and Pellikaan~\cite{JP18}. The definition of a $q$-matroid is a straightforward generalisation of the definition of a classical matroid in terms of its rank function. We remark that this definition in fact does not require $E$ to be over a finite field.  
However, as we are focussed on vector spaces over finite fields, we will assume in our definition that a $q$-matroid is an object defined with respect to an $\mathbb{F}_q$-vector space.

\begin{definition}\label{rankfunction}
	A $q$-{\bf matroid} $M$ is a pair $(E,r)$ where
	$r$ is an integer-valued 
	function defined on the subspaces of $E$ with the following properties:
	\begin{description}
		\item[(R1)] For every subspace $A\subseteq E$, $0\leq r(A) \leq \dim A$. 
		\item[(R2)] For all subspaces $A\subseteq B \subseteq E$, $r(A)\leq r(B)$. 
		\item[(R3)] For all $A,B$, $r(A+ B)+r(A\cap B)\leq r(A)+r(B)$.  
	\end{description}
	The function $r$ is called the {\bf rank function} of the $q$-matroid. 
\end{definition}

We list some examples of $q$-matroids \cite{JP18}.

\begin{example}\label{ex:uniform}{[The uniform $q$-matroid]}
	Let 
	$M=(E,r)$, where $$r(A)=\begin{cases}
	\dim A, & \text{ if } \dim A \leq k, \\
	k, & \text{ if } \dim A > k,
	\end{cases}$$
	for $0\leq k \leq n$ and a subspace $A$ of $E$. Then $M$ satisfies axioms (R1)-(R3) and is called the uniform $q$-matroid. We denote it by $U_{k,n}(\mathbb{F}_q)$.
\end{example}

\begin{example}\label{ex:rankmetric}{[Representable $q$-matroid]} 
	Let $G$ be a full-rank $k\times n$ matrix over an extension field $\mathbb{F}_{q^m}$ of $\mathbb{F}_q$. For any subspace $A\subseteq E$
	define the rank of $A$ to be {$r(A) = \mathrm{rank}_{\mathbb{F}_{q^m}}(GY)$}
	for any $\mathbb{F}_q$-matrix $Y$ whose columns span $A$. It can be shown that $(E,r)$ satisfies (R1)-(R3) and hence is a $q$-matroid.
\end{example} 

In classical matroid theory, there are several definitions of a matroid in terms of the axioms of its independent spaces, bases, flats, circuits, etc. 
These equivalences, which are not immediately apparent, are referred to in the literature as {\it cryptomorphisms}. 
In this paper we will establish 
a new cryptomorphism for $q$-matroids. 
First, we define independent spaces, flats, and the closure function in terms of the rank function of a $q$-matroid.

\begin{definition}\label{independentspaces}
	Let $(E,r)$ be a $q$-matroid. 
	A subspace $A$ of $E$ is called {\bf independent} if $$r(A)=\dim A.$$
	We write ${\mathcal I}_r$ to denote the set of independent spaces of the $q$-matroid $(E,r)$.  
	A subspace that is not independent is called {\bf dependent}. 
	We call $C$ a {\bf circuit} if it is itself a dependent space and every proper subspace of $C$ is independent.
\end{definition}

\begin{definition}\label{flat}
{
	Given a $q$-matroid $(E,r)$, a subspace $F \subseteq E$  is called a {\bf flat} } if for all one-dimensional 
	subspaces $x$ such that $x\nsubseteq F$ we have that $$r(F+x)>r(F).$$
	We write ${\mathcal F}_r$ to denote the set of flats of the $q$-matroid $(E,r)$.
\end{definition}

We define the notion of a flat via axioms, without reference to a rank function.

\begin{definition}\label{flat-axioms}
	Let $\mathcal{F} \subseteq \mathcal{L}(E)$. We
	define the following flat axioms:
	\begin{description}
		\item[(F1)] $E\in\mathcal{F}$.
		\item[(F2)] If $F_1\in\mathcal{F}$ and $F_2\in\mathcal{F}$, then $F_1\cap F_2\in\mathcal{F}$.
		\item[(F3)] For all $F\in\mathcal{F}$ and $x\subseteq E$ a one-dimensional subspace not contained in $F$, there is a unique $F'\in\mathcal{F}$ covering $F$ such that $x\subseteq F'$.
	\end{description}
If $\mathcal{F}$ satisfies (F1)-(F3) then we call its members {\bf flats}.
	We write $(E,\mathcal{F})$ to denote a vector space $E$ together with a family of flats satisfying the flat axioms.
\end{definition}

We will see in Section \ref{sec-cryptomorphisms} that a space of flats $(E,\mathcal{F})$ completely determines a $q$-matroid.
The following theorem summarizes important results from \cite{JP18}. 
\begin{theorem}\label{th:JP18prel}
	Let $(E,r)$ be a $q$-matroid and let $A,B\subseteq E$ and let $x,y \subseteq E$ each have dimension one. The following hold.
	\begin{enumerate}
		\item\label{th:JP18prel:part1}{ $r(A+x) \leq r(A)+1$.} 
		\item \label{th:JP18prel:part2} If $r(A+z)=r(A)$ for each one-dimensional space $z\subseteq B$, $z\nsubseteq A$ then $r(A+B)=r(A)$. 
		
		\item\label{th:JP18prel:part3} If $r(A+x)=r(A+y)=r(A)$ then $r(A+x+y)=r(A)$. 
	\end{enumerate}
\end{theorem}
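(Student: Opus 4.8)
The plan is to obtain all three statements directly from the rank axioms, chiefly submodularity (R3) together with monotonicity (R2) and the bound (R1); no further structure is needed.

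For part~\ref{th:JP18prel:part1} I would apply (R3) to the pair $A$ and $x$, which gives $r(A+x)+r(A\cap x)\le r(A)+r(x)$; since $\dim x=1$, (R1) yields $r(x)\le 1$, and $r(A\cap x)\ge 0$, so $r(A+x)\le r(A)+1$. For part~\ref{th:JP18prel:part3} the point is that $(A+x)+(A+y)=A+x+y$ while $A\subseteq(A+x)\cap(A+y)$. Applying (R3) to the pair $A+x$, $A+y$ and then (R2) to this inclusion gives
\[
r(A+x+y)+r(A)\ \le\ r(A+x+y)+r\big((A+x)\cap(A+y)\big)\ \le\ r(A+x)+r(A+y)\ =\ 2\,r(A),
\]
hence $r(A+x+y)\le r(A)$, and the reverse inequality is immediate from (R2) applied to $A\subseteq A+x+y$. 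This computation never uses the one-dimensionality of $x,y$: it shows that whenever $r(A+B_1)=r(A+B_2)=r(A)$ for subspaces $B_1,B_2$, then $r(A+B_1+B_2)=r(A)$, and this generalisation is what I would use to prove part~\ref{th:JP18prel:part2}.

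For part~\ref{th:JP18prel:part2} I would induct on $\dim B$. If $\dim B\le 1$ the claim is immediate: when $\dim B=1$, either $B\subseteq A$, so $A+B=A$, or $B$ is a one-dimensional space not contained in $A$, to which the hypothesis applies directly. For the inductive step ($\dim B\ge 2$), fix a one-dimensional $z\subseteq B$ and pick a hyperplane $B'$ of $B$ with $z\nsubseteq B'$, so that $B=B'+z$ and $\dim B'=\dim B-1$. Every one-dimensional subspace of $B'$ not contained in $A$ is also a one-dimensional subspace of $B$ not contained in $A$, so the hypothesis of the statement is inherited by $B'$, and the induction hypothesis gives $r(A+B')=r(A)$. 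If $z\subseteq A$ then $A+B=A+B'$ and we are done; otherwise $r(A+z)=r(A)$ by hypothesis, and applying the generalisation of part~\ref{th:JP18prel:part3} noted above with $B_1=B'$ and $B_2=z$ gives $r(A+B)=r(A+B'+z)=r(A)$.

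None of this is technically difficult; the only step needing a little care is the induction in part~\ref{th:JP18prel:part2}, where one must check that the hypothesis passes to the subspaces of $B$ so that the induction closes, and must treat the degenerate case $z\subseteq A$ separately before invoking submodularity.
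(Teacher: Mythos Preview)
Your argument is correct. Note, however, that the paper does not give its own proof of this theorem: it states the result as a summary of facts from \cite{JP18} and uses it as a black box. There is therefore nothing in the paper to compare your proof against. What you have written is the standard derivation from the rank axioms; in particular your observation that part~\ref{th:JP18prel:part3} holds with arbitrary subspaces in place of one-dimensional $x,y$ is exactly the right engine for the induction in part~\ref{th:JP18prel:part2}, and your handling of the degenerate case $z\subseteq A$ closes the argument cleanly.
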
  

An interesting family of matroids, the PMDs were introduced in \cite{murty1970equicardinal,Young}. For more details in the classical case, we refer the reader to the work of Deza~\cite{deza1992perfect}. We consider here a $q$-analogue of a PMD.

\begin{definition}
	A $q$-{\bf perfect matroid design} ($q$-PMD) is a $q$-matroid with the property that any two of its flats of the same rank have the same dimension. 
\end{definition}

\subsection{Subspace designs}

Given a pair of nonnegative integers $N$ and $M$, {$M \leq N$,} the $q$-{\bf binomial} or {\bf Gaussian coefficient} counts the number of $M$-dimensional subspaces of an $N$-dimensional subspace over $\mathbb{F}_q$ and is given by:
$$\qbin{N}{M}{q}:=\prod_{i=0}^{M-1}\frac{q^N-q^i}{q^M-q^i}.$$
We write $\qbin{E}{k}{q}$ to denote the set of all $k$-subspaces of $E$ (the $k$-Grassmanian of $E$).

\noindent Recall the following well-known result.
\begin{lemma}\label{lem:folklore}
	Let $s,t$ be positive integers satisfying $0\leq t \leq s \leq n$. The number of $s$-spaces of $E$ that contain a fixed $t$-space is given by $\qbin{n-t}{s-t}{q}$.	
\end{lemma}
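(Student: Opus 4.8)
The plan is to exhibit a dimension-shifting bijection between the $s$-subspaces of $E$ containing a fixed $t$-subspace and the $(s-t)$-subspaces of a quotient space, and then appeal to the defining property of the Gaussian coefficient. First I would fix a $t$-dimensional subspace $T\subseteq E$ and consider the canonical projection $\pi\colon E\to E/T$, noting that $\dim(E/T)=n-t$. By the correspondence theorem for vector spaces, the assignment $W\mapsto\pi(W)=W/T$ is an inclusion-preserving bijection from the set of subspaces $W$ of $E$ with $T\subseteq W$ onto the set of all subspaces of $E/T$, with inverse $\overline{W}\mapsto\pi^{-1}(\overline{W})$.

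Next I would observe that this bijection shifts dimensions by exactly $t$: for any subspace $W$ with $T\subseteq W\subseteq E$ one has $\dim(W/T)=\dim W-\dim T=\dim W-t$. Hence $W$ has dimension $s$ if and only if $W/T$ has dimension $s-t$, and restricting the bijection above to such subspaces yields a bijection between the $s$-subspaces of $E$ that contain $T$ and the $(s-t)$-subspaces of $E/T$. The hypothesis $0\leq t\leq s\leq n$ guarantees that $0\leq s-t\leq n-t$, so this count is meaningful.

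Finally, since $E/T$ is an $(n-t)$-dimensional $\mathbb{F}_q$-vector space, the number of its $(s-t)$-subspaces is, by the definition of the Gaussian coefficient recalled just above, equal to $\qbin{n-t}{s-t}{q}$. This gives the claimed formula. There is no genuine obstacle in the argument; the only point worth a remark is that the answer does not depend on the chosen $t$-subspace $T$, which is automatic since the argument never used any special property of $T$ (equivalently, since $\mathrm{GL}(E)$ acts transitively on $t$-subspaces of $E$).
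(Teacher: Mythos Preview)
Your argument is correct: the quotient map $E\to E/T$ sets up a dimension-shifting bijection between $s$-subspaces of $E$ containing $T$ and $(s-t)$-subspaces of the $(n-t)$-dimensional space $E/T$, whence the count is $\qbin{n-t}{s-t}{q}$. The paper does not actually supply a proof of this lemma---it is stated as a ``well-known result'' without argument---so there is nothing to compare against, and your proof would serve perfectly well as a justification.
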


We recall briefly the definition of a subspace design and well known examples of these combinatorial objects. The interested reader is referred to the survey \cite{braun2018q} and the references therein for a comprehensive treatment of designs over finite fields. For more recent results, see also \cite{BN19,buratti+}.
\begin{definition}
	Let $1\leq t\leq k \leq n$ be integers and let $\lambda\geq 0$ be an integer. A $t$-$(n,k,\lambda;q)$ {\bf subspace design} is a pair $(E,\mathcal{B})$, where $\mathcal{B}$ is a collection of subspaces of $E$ of dimension $k$, called blocks, with the property that every subspace of $E$ of dimension $t$ is contained in exactly $\lambda$ blocks. 
\end{definition}

Subspace designs are also known as designs over finite fields. A $q$-{\bf Steiner system} is a $t$-$(n,k,1;q)$ subspace design and is said to have parameters
$S(t,k,n;q)$. The $q$-Steiner triple systems are those with parameters $S(2,3,n;q)$ and are denoted by $STS(n;q)$. 
The $t$-$(n,k,\lambda;q)$ subspace designs with $t=1$ and $\lambda =1$ are examples of spreads.

\begin{example}
	A $q$-analogue of the Fano plane would be given by an $STS(7;q)$, whose existence is an open question for any $q$. 
\end{example}

For a subspace $U$ of $E$ we define $U^\perp:=\{ v \in E : \langle u, v \rangle = 0\}$ to be the orthogonal space of $U$ with respect to the scalar product $\langle u, v \rangle=\sum_{i=1}^{n} u_i v_i$.
We will use the notions of the supplementary and dual subspace designs ~\cite{suzuki1990inequalities, KP}.

\begin{definition}\label{lem:dualdesign} Let $k,t,\lambda$ be positive integers and let $\mathcal{D}=(E, \mathcal{B})$ be a $t$-$(n,k,\lambda;q)$ design.
	\begin{enumerate}
		\item The {\bf supplementary design} of $\mathcal{D}$ is the subspace design $\left(E,\qbin{E}{k}{q}- \mathcal{B}\right)$.\\ 
		It has parameters $t$-$\left(n,k,\qbin{n-t}{k-t}{q}-\lambda;q\right)$.
		\item The {\bf dual design} of $\mathcal{D}$ is given by $(E, {\mathcal B}^\perp)$, where $ \mathcal B^\perp:=\{ U^\perp : U \in \mathcal B \}$. It has parameters $$t- \left(n,n-k,\lambda\qbin{n-t}{k}{q}\qbin{n-t}{k-t}{q}^{-1};q \right).$$
	\end{enumerate}
\end{definition} 

The {\it intersection numbers} $\lambda_{i,j}$ defined in Lemma~\ref{intersection_numbers} were given in \cite{KP} and \cite{suzuki1990inequalities}. These design invariants play an important role in establishing non-existence of a design for a given set of parameters.

\begin{lemma}\label{intersection_numbers}
	Let $k,t,\lambda$ be positive integers and let $\mathcal D$ be a $t$-$(n,k,\lambda;q)$ design. 
	Let $I,J$ be $i,j$ dimensional subspaces of $\mathbb{F}_{q}^n$ satisfying $i+j \leq t$ and $I \cap J = \{0\}$.
	Then the number
	$$\lambda_{i,j}:=|\{  U \in \mathcal B : I \subseteq U, \ J \cap U = \{0\} \}|, $$ where $\mathcal{B}$ is the set of blocks of $\mathcal{D}$, 
	depends only on $i$ and $j$, and is given by the formula
	$$ \lambda_{i,j} = q^{j(k-i)}\lambda \qbin{n-i-j}{k-i}{q}\qbin{n-t}{k-t}{q}^{-1}.$$
\end{lemma}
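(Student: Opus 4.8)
The plan is to derive the formula from the standard count of blocks through a fixed subspace together with an induction on $j$. I first dispose of the case $j=0$. For a fixed $s$-dimensional subspace $S$ with $s\le t$, double counting the incident pairs $(T,U)$ with $S\subseteq T\subseteq U$, $\dim T=t$ and $U\in\mathcal B$ — counting the $t$-spaces through $S$ by Lemma~\ref{lem:folklore} and the blocks through each such $T$ by the design property — shows that the number of blocks containing $S$ equals $\lambda\qbin{n-s}{t-s}{q}\qbin{k-s}{t-s}{q}^{-1}$, which by the standard identity $\qbin{n-s}{k-s}{q}\qbin{k-s}{t-s}{q}=\qbin{n-s}{t-s}{q}\qbin{n-t}{k-t}{q}$ equals $\lambda\qbin{n-s}{k-s}{q}\qbin{n-t}{k-t}{q}^{-1}$. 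In particular this number depends only on $s$; it is $\lambda_{s,0}=\lambda_{s}$, and agrees with the claimed formula at $j=0$.

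For the inductive step I assume the statement (including the ``depends only on $i,j$'' part) for $j-1$ and all admissible pairs. Fix $I,J$ with $\dim J=j\ge1$, $I\cap J=\{0\}$, $i+j\le t$, and choose a hyperplane $J'$ of $J$. By the inductive hypothesis applied to $(I,J')$ — valid since $i+(j-1)\le t$ — there are exactly $\lambda_{i,j-1}$ blocks $U$ with $I\subseteq U$ and $U\cap J'=\{0\}$. Partition these according to whether $U\cap J=\{0\}$; those with $U\cap J=\{0\}$ are exactly the $\lambda_{i,j}$ blocks we wish to count. If $U\cap J\neq\{0\}$, then since $U\cap J'=\{0\}$ and $\dim J=\dim J'+1$ the intersection $U\cap J$ is a line $\ell\subseteq J$ with $\ell\not\subseteq J'$, and there are exactly $q^{j-1}$ such lines. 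Conversely, for each such line $\ell$ one checks that $(I+\ell)\cap J'=\{0\}$ (using $I\cap J=\{0\}$ to force the $I$-component of any common vector to vanish, then $\ell\cap J'=\{0\}$) and that $\dim(I+\ell)+\dim J'=i+j\le t$; hence, by the inductive hypothesis applied to $(I+\ell,J')$, the number of blocks $U$ with $I+\ell\subseteq U$ and $U\cap J'=\{0\}$ equals $\lambda_{i+1,j-1}$, and each such $U$ automatically has $U\cap J=\ell$. Summing over the $q^{j-1}$ lines yields the recursion
$$\lambda_{i,j-1}=\lambda_{i,j}+q^{j-1}\,\lambda_{i+1,j-1},$$
which in particular shows $\lambda_{i,j}$ depends only on $i$ and $j$.

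It then remains to substitute the inductive formulas for $\lambda_{i,j-1}$ and $\lambda_{i+1,j-1}$ and simplify. Writing $a:=k-i$, both resulting terms carry the common factor $q^{(j-1)a}$; after dividing out $\lambda\,q^{(j-1)a}\qbin{n-t}{k-t}{q}^{-1}$ the bracket becomes $\qbin{n-i-j+1}{a}{q}-\qbin{n-i-j}{a-1}{q}$, which by the $q$-Pascal rule $\qbin{m}{a}{q}=\qbin{m-1}{a-1}{q}+q^{a}\qbin{m-1}{a}{q}$ equals $q^{a}\qbin{n-i-j}{a}{q}$, giving exactly $\lambda_{i,j}=q^{j(k-i)}\lambda\qbin{n-i-j}{k-i}{q}\qbin{n-t}{k-t}{q}^{-1}$.

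The only genuinely delicate point is the bookkeeping in the inductive step: one must verify that passing from $(I,J)$ to $(I+\ell,J')$ preserves both the trivial-intersection condition and the dimension bound $i+j\le t$, since this is precisely what licenses the use of the inductive hypothesis; everything after the recursion is routine $q$-binomial manipulation. (An alternative, more symmetric route avoids the induction entirely: Möbius inversion over the subspace lattice of $J$ expresses $\lambda_{i,j}$ as $\sum_{\ell=0}^{j}(-1)^{\ell}q^{\binom{\ell}{2}}\qbin{j}{\ell}{q}\lambda_{i+\ell}$, after which the same closed form follows from a $q$-binomial summation identity; I would include whichever proof reads more cleanly in context.)
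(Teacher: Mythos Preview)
The paper does not actually prove this lemma: it is quoted as a known result from \cite{KP} and \cite{suzuki1990inequalities}, with no proof given in the text. So there is no ``paper's proof'' to compare against.

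That said, your argument is correct. The base case $j=0$ is the standard double count of pairs $(T,U)$ with $S\subseteq T\subseteq U$, $\dim T=t$, $U\in\mathcal B$, and your rewriting via the identity $\qbin{n-s}{k-s}{q}\qbin{k-s}{t-s}{q}=\qbin{n-s}{t-s}{q}\qbin{n-t}{k-t}{q}$ is fine. In the inductive step the key verifications---that a block $U$ with $U\cap J'=\{0\}$ and $U\cap J\neq\{0\}$ meets $J$ in exactly a line outside $J'$, that $(I+\ell)\cap J'=\{0\}$, that $\dim(I+\ell)+\dim J'=i+j\le t$, and that the blocks through $I+\ell$ avoiding $J'$ are precisely those with $U\cap J=\ell$---are all checked correctly, giving the recursion $\lambda_{i,j-1}=\lambda_{i,j}+q^{\,j-1}\lambda_{i+1,j-1}$. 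The final $q$-Pascal simplification is routine and yields the stated closed form. Your alternative via M\"obius inversion over the subspace lattice of $J$ is indeed the other standard route and is closer in spirit to Suzuki's original treatment; either would be acceptable as a self-contained proof here.
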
  

By Lemma~\ref{intersection_numbers}, the existence of a $t$-$(n,k,\lambda;q)$ design implies the {\it integrality conditions}, namely that $\lambda_i={\lambda}_{i,0}$ are positive integers for $0 \leq i\leq t$.

\begin{definition}
	A parameter set $t$-$(n,k,\lambda; q)$ is called {\bf admissible} if it satisfies the integrality conditions and is called {\bf realisable} if 
	a $t$-$(n,k,\lambda; q)$ design exists. 
\end{definition}

It is well-known and follows directly from the integrality conditions that an $STS(n;q)$ is admissible if and only if $n\equiv 1 \text{ or } 3 \mod{6}$. 
More generally, it was observed in \cite{buratti+} that a $\mathcal{S}(2,k,n;q)$ Steiner system exists only if $n\equiv 1,k \mod k(k-1)$.  

Finally, for a given subspace design $(E,\mathcal{B})$, an automorphism $\phi$ of $\mathcal{L}(E)$ is called an {\bf automorphism of the design} if $\phi(\mathcal{B})= \mathcal{B}$. We will denote the automorphism group of the design $\mathcal{D}=(E,\mathcal{B})$ by $\mathrm{Aut}(\mathcal{D})$ or by
$\mathrm{Aut}(E,\mathcal{B})$.
The automorphism group of a subspace design is equal to the automorphism group of its supplementary design and is in $1-1$ correspondence with that of the dual design.
Automorphism groups have been leveraged to construct new subspace designs using the Kramer-Mesner method~\cite{KramerMesner}. 
If the number of orbits of the automorphism group is small enough, then the corresponding diophantine system of equations can be solved in a feasible amount of time on a personal computer~\cite{braun2005some, braun2005systematic}. It is known that the {binary} $q$-Fano plane has automorphism group of order at most $2$~\cite{BraunKier,KierKurz}, so this method cannot be applied in this case. 

\section{A Cryptomorphism of $q$-Matroids}\label{sec-cryptomorphisms}

In this section we provide a new cryptomorphic definition of a $q$-matroid, in terms of its flats. {The proofs of this cryptomorphism largely follow the classical case . We include the details for expository purposes.} \\
Recall that a flat of a $q$-matroid $(E,r)$ is a subspace $F$ such that for all one-dimensional spaces $x\not\subseteq F$ we have that $r(F+x)>r(F)$. We remark that the results of this section hold for $q$-matroids defined with respect to finite dimensional vector spaces over arbitrary fields.

\begin{definition}\label{Cover}
	Let $F_1$ and $F_2$ be flats of a $q$-matroid.
	We say that $F_1$ {\bf covers} $F_2$ if $F_2\subseteq F_1$ and there is no other flat $F'$ such that $F_2\subseteq F'\subseteq F_1$. 
\end{definition}

Before establishing a cryptomorphism between the $q$-matroids $(E,r)$ and $(E,\mathcal{F})$, we prove some preliminary results.

\begin{lemma}\label{Uguagl}
	Let $(E,r)$ be a $q$-matroid with rank function $r$. Let $A\subseteq B$ be subspaces of $E$ and let $x$ be a one-dimensional subspace of $E$. 
	If $r(B+x)=r(B)+1$ then $r(A+x)=r(A)+1$.
\end{lemma}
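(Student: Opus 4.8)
The plan is to prove this by contradiction using submodularity (R3), together with the bound $r(A+x)\leq r(A)+1$ from Theorem~\ref{th:JP18prel}\eqref{th:JP18prel:part1}. Since $A\subseteq A+x$, axiom (R2) gives $r(A)\leq r(A+x)$, and combined with the aforementioned bound we have $r(A+x)\in\{r(A),r(A)+1\}$. So it suffices to rule out the case $r(A+x)=r(A)$, i.e.\ to show that if $r(A+x)=r(A)$ then $r(B+x)=r(B)$, contradicting the hypothesis $r(B+x)=r(B)+1$.

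So, assume $r(A+x)=r(A)$. The key step is to apply (R3) to the pair of subspaces $A+x$ and $B$. Because $A\subseteq B$, their sum is $(A+x)+B=B+x$, and their intersection satisfies $(A+x)\cap B\supseteq A$ (as $A$ lies in both), so by (R2) we have $r((A+x)\cap B)\geq r(A)$. Plugging into (R3):
\[
r(B+x)+r\big((A+x)\cap B\big)\;\leq\;r(A+x)+r(B)\;=\;r(A)+r(B),
\]
and since $r((A+x)\cap B)\geq r(A)$ this forces $r(B+x)\leq r(B)$. On the other hand (R2) gives $r(B)\leq r(B+x)$, so $r(B+x)=r(B)$, contradicting $r(B+x)=r(B)+1$.

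Therefore $r(A+x)\neq r(A)$, and hence $r(A+x)=r(A)+1$, as claimed. There is no real obstacle here: the only thing to get right is choosing the correct instance of the submodular inequality (namely applying it to $A+x$ and $B$ rather than to $A$ and $B$), and observing that $A\subseteq B$ makes the sum collapse to $B+x$ while keeping $A$ inside the intersection. Everything else is a direct application of the rank axioms and the already-established inequality $r(A+x)\leq r(A)+1$.
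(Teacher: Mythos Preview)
Your proof is correct and follows essentially the same approach as the paper: both apply (R3) to the pair $A+x$ and $B$, use $(A+x)+B=B+x$ and $A\subseteq (A+x)\cap B$, and combine with the bound $r(A+x)\leq r(A)+1$ from Theorem~\ref{th:JP18prel}. The only cosmetic difference is that the paper argues directly (deriving $r(A+x)\geq r(A)+1$) whereas you phrase it as a contradiction via the contrapositive.
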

\begin{proof}
	Suppose that $r(B+x)=r(B)+1$.	
	Since $A \subseteq B$, we have $(A+x)+B = B+x$ and $A \subseteq (A+x) \cap B$. 	
	Therefore, by (R2) and applying (R3) to $A+x$ and $B$ we get: 
	$$ r(A+x)+r(B) \geq r((A+x)+B)+r((A+x)\cap B) \geq r(B+x)+r(A) =r(B)+1+ r(A), $$
	and so $r(A+x) \geq r(A)+1$.
	By Theorem \ref{th:JP18prel}, $r(A+x) \leq r(A)+1$ and so we get the equality $r(A+x) = r(A)+1$.
\end{proof}

\begin{lemma}\label{F2dasolo}
	If $F_1,F_2$ are two flats of a $q$-matroid $(E,r)$, then $F_1\cap F_2$ is also a flat.
\end{lemma}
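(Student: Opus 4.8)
The plan is to work directly from the definition of a flat: a subspace $G$ is a flat precisely when $r(G+x) > r(G)$ for every one-dimensional subspace $x \not\subseteq G$. So I would set $G := F_1 \cap F_2$, fix an arbitrary one-dimensional subspace $x$ with $x \not\subseteq G$, and aim to show $r(G+x) = r(G)+1$.

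The key observation is that if $x \not\subseteq F_1 \cap F_2$, then $x$ must fail to be contained in at least one of the two flats; without loss of generality assume $x \not\subseteq F_1$. Since $F_1$ is a flat, the defining property gives $r(F_1+x) > r(F_1)$, and combining this with Theorem~\ref{th:JP18prel}(\ref{th:JP18prel:part1}), which says $r(F_1+x) \le r(F_1)+1$, forces $r(F_1+x) = r(F_1)+1$.

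Now I would invoke Lemma~\ref{Uguagl} with the inclusion $G \subseteq F_1$ (which holds since $G = F_1 \cap F_2$) and the one-dimensional space $x$: from $r(F_1+x) = r(F_1)+1$ it follows that $r(G+x) = r(G)+1 > r(G)$. Since $x$ was an arbitrary one-dimensional subspace not contained in $G$, this shows $G = F_1 \cap F_2$ satisfies Definition~\ref{flat}, i.e.\ it is a flat.

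There is no real obstacle here beyond recognizing that Lemma~\ref{Uguagl} is exactly the tool that lets one ``push down'' the rank-increase property from a flat to any subspace contained in it; the case split on whether $x$ avoids $F_1$ or $F_2$ is the only genuinely new idea, and it is immediate from $x \not\subseteq F_1 \cap F_2$. The argument does not even need property (F1)/(F3) of flats — only the rank characterization — and it is symmetric in $F_1$ and $F_2$, so the ``without loss of generality'' is harmless.
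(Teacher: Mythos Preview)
Your proof is correct and essentially identical to the paper's own argument: both set $F=F_1\cap F_2$, note that any one-dimensional $x\not\subseteq F$ must avoid at least one $F_i$, use the flat property together with Theorem~\ref{th:JP18prel}(\ref{th:JP18prel:part1}) to get $r(F_i+x)=r(F_i)+1$, and then apply Lemma~\ref{Uguagl} to push this down to $F$. There is nothing to add.
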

\begin{proof}
	Let $F:=F_1\cap F_2$ and take a one-dimensional space $x \nsubseteq F$; therefore $x$ is not a subspace of $F_1$ or $F_2$; say, without loss of generality, that  $x \nsubseteq F_1$. By Theorem \ref{th:JP18prel}, $r(F_1+x)=r(F_1)+1$ and by Lemma \ref{Uguagl}, $r(F+x)=r(F)+1  > r(F)$, which implies that $F$ is flat of $(E,r)$.
\end{proof}

\begin{definition}
	Let $\mathcal{F}$ be a collection of subspaces of $E$ and let
	$A\subseteq E$ be a subspace. We define the subspace
	$$C_{\mathcal{F}}(A):=\bigcap \{F\in \mathcal{F} : A \subseteq F\} .$$
\end{definition}

\begin{lemma}\label{MinimalFlat}
	Let $\mathcal{F}$ be a collection of subspaces of $E$ satisfying the axioms (F1)-(F3). Let $A  \subseteq E$ be a subspace. Then $C_{\mathcal{F}}(A)$ is the unique flat in $\mathcal{F}$ such that the following hold.
	\begin{enumerate}
		\item $A \subseteq C_{\mathcal{F}}(A)$.
		\item If $A \subseteq F\in \mathcal{F}$, then $C_{\mathcal{F}}(A)\subseteq F$.
		\label{MinimalFlatPart3} 
	\end{enumerate}
	 Moreover, if $A\subseteq B \subseteq E$, then  $C_{\mathcal{F}}(A) \subseteq C_{\mathcal{F}}(B)$.
\end{lemma}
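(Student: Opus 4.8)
The plan is to verify the three properties in order, using that $C_{\mathcal{F}}(A)$ is by definition an intersection of flats, together with the flat axioms. First I would observe that the family $\{F \in \mathcal{F} : A \subseteq F\}$ is nonempty, since $E \in \mathcal{F}$ by (F1) and $A \subseteq E$; hence the intersection defining $C_{\mathcal{F}}(A)$ is taken over a nonempty set. By repeated application of (F2) — or rather its obvious extension to arbitrary finite intersections, which suffices since $\mathcal{L}(E)$ is finite — the subspace $C_{\mathcal{F}}(A)$ is itself a member of $\mathcal{F}$, i.e.\ it is a flat. Property (1) is then immediate: every $F$ in the indexing set contains $A$, so their intersection contains $A$. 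Property (2) is also immediate from the definition of intersection: if $A \subseteq F \in \mathcal{F}$, then $F$ is one of the flats being intersected, so $C_{\mathcal{F}}(A) \subseteq F$.

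For property (3), I would argue as follows. Since $A \subseteq B \subseteq C_{\mathcal{F}}(B)$ by part (1) applied to $B$, and $C_{\mathcal{F}}(B) \in \mathcal{F}$, the flat $C_{\mathcal{F}}(B)$ belongs to the set $\{F \in \mathcal{F} : A \subseteq F\}$; therefore $C_{\mathcal{F}}(A) \subseteq C_{\mathcal{F}}(B)$ by part (2) applied to $A$. Finally, for the uniqueness assertion, suppose $G \in \mathcal{F}$ satisfies both (1) and (2) in place of $C_{\mathcal{F}}(A)$. From (1) for $G$ we get $A \subseteq G$, so applying (2) for $C_{\mathcal{F}}(A)$ gives $C_{\mathcal{F}}(A) \subseteq G$; conversely, since $A \subseteq C_{\mathcal{F}}(A) \in \mathcal{F}$, applying (2) for $G$ gives $G \subseteq C_{\mathcal{F}}(A)$. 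Hence $G = C_{\mathcal{F}}(A)$.

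I do not expect any serious obstacle here; the lemma is essentially a formal unwinding of the definition of $C_{\mathcal{F}}$ as a minimal enclosing flat, and the only point that genuinely uses the axioms is the closure of $\mathcal{F}$ under intersection (from (F2), extended to finitely many terms) and the nonemptiness of the indexing family (from (F1)). Axiom (F3) is not needed for this lemma. The mildest care required is to note that the finiteness of the ambient lattice $\mathcal{L}(E)$ legitimises passing from the pairwise intersection axiom (F2) to arbitrary finite intersections; this can be done by a trivial induction on the number of flats involved.
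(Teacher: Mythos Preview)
Your argument is correct and follows the same route as the paper's proof, which simply notes that (1), (2), and uniqueness are immediate from the definition and deduces (3) from the inclusion $\{F\in\mathcal{F}:B\subseteq F\}\subseteq\{F\in\mathcal{F}:A\subseteq F\}$. In fact you are more explicit than the paper: you spell out why $C_{\mathcal{F}}(A)\in\mathcal{F}$ via (F1) and iterated (F2), and you give the antisymmetry argument for uniqueness, both of which the paper leaves implicit.
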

\begin{proof}
	(1) and (2) follow immediately from the definition of $C_{\mathcal{F}}(A)$, which is clearly uniquely determined because if there were two flats satisfying these properties, their intersection would violate (2).
	If $B \subseteq F$ for some flat $F$ then $A \subseteq F$ and so clearly, $C_{\mathcal{F}}(A) \subseteq C_{\mathcal{F}}(B)$.
\end{proof}

In the instance that $\mathcal{F}$ is the set of flats of a $q$-matroid $(E,r)$, then from Lemma \ref{F2dasolo}, $C_{\mathcal{F}}(A)$ is itself a flat, which we denote by $F_A$. In particular, $F_A$ is the unique minimal flat of $\mathcal{F}_r$ that contains $A$.

\begin{lemma}\label{minFl}
	Let $(E,r)$ be a $q$-matroid, 	
	let $G$ be a subspace of $E$ and let $x$ be a one-dimensional subspace such that $r(G)=r(G+x)$. 
	Then $x \subseteq F_G$. 
\end{lemma}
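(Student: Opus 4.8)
The plan is to show that the minimal flat $F_G$ containing $G$ also contains $x$, given that adding $x$ to $G$ does not increase the rank. The key observation is that $F_G$ is a flat, so by Definition~\ref{flat}, if $x \nsubseteq F_G$ then $r(F_G + x) > r(F_G)$; thus it suffices to derive a contradiction by showing $r(F_G + x) = r(F_G)$ under the hypothesis $r(G+x) = r(G)$.

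First I would record that $G \subseteq F_G$ and $r(G) = r(F_G)$. The inclusion is immediate from Lemma~\ref{MinimalFlat}(1) (together with Lemma~\ref{F2dasolo}, which makes $F_G$ a flat). For the rank equality, one direction is (R2); for the other, I would invoke the fact that $F_G = C_{\mathcal{F}_r}(G)$ is obtained from $G$ by repeatedly adjoining one-dimensional spaces $z$ with $r(G' + z) = r(G')$ — or, more directly, appeal to a known closure-rank identity from \cite{JP18,bcj}. If such an identity is not available as a cited black box, I would instead argue: let $H$ be the span of all one-dimensional $z$ with $r(G + z) = r(G)$; by Theorem~\ref{th:JP18prel}\eqref{th:JP18prel:part2} (applied with $A = G$, $B = H$) we get $r(G + H) = r(G)$, and one checks $G + H$ is a flat containing $G$, hence $F_G \subseteq G + H$, giving $r(F_G) = r(G)$.

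Next, the heart of the argument: assume for contradiction that $x \nsubseteq F_G$. Since $F_G$ is a flat, $r(F_G + x) = r(F_G) + 1 = r(G) + 1$. On the other hand, $r(G+x) = r(G)$ by hypothesis, so $r(G+x) = r(G) < r(G) + 1 = r(F_G + x)$. But $G + x \subseteq F_G + x$, so by Lemma~\ref{Uguagl} applied with the roles $A = G+x$... — more cleanly, I would apply Lemma~\ref{Uguagl} in the contrapositive form: we have subspaces $G \subseteq F_G$ and the one-dimensional space $x$, and $r(F_G + x) = r(F_G) + 1$, so Lemma~\ref{Uguagl} forces $r(G + x) = r(G) + 1$, contradicting $r(G+x) = r(G)$. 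Hence $x \subseteq F_G$.

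The main obstacle I anticipate is justifying $r(F_G) = r(G)$ cleanly; everything else is a short deduction from Lemma~\ref{Uguagl}. If the paper has already established (as seems likely from the surrounding development) that the closure operator $A \mapsto F_A$ satisfies $r(F_A) = r(A)$, then this step is immediate and the whole proof is two lines. Otherwise the Theorem~\ref{th:JP18prel}\eqref{th:JP18prel:part2} route above handles it, at the cost of verifying that $G + H$ (the span of $G$ together with all rank-preserving one-dimensional extensions) is genuinely a flat — which itself follows from Theorem~\ref{th:JP18prel}\eqref{th:JP18prel:part1} and the definition of a flat.
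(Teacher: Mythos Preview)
Your ``clean'' route via Lemma~\ref{Uguagl} is correct and is essentially the paper's argument: the paper applies (R3) directly to $F_G$ and $G+x$ to obtain $r(F_G+x)\leq r(F_G)$, contradicting the flat property, while you package the same submodularity step through the already-proved Lemma~\ref{Uguagl} (with $A=G$, $B=F_G$) to obtain $r(G+x)=r(G)+1$, contradicting the hypothesis. These are the same idea.

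However, drop the entire discussion of $r(F_G)=r(G)$. Your Lemma~\ref{Uguagl} argument does not use it, and in the paper that equality is Lemma~\ref{EqRank}, which comes \emph{after} the present lemma and invokes it in its proof; citing it here would be circular. Your proposed self-contained justification (building $H$ as the span of rank-preserving one-dimensional extensions and checking $G+H$ is a flat) is in fact precisely the proof of Lemma~\ref{EqRank}, so you would be duplicating later work unnecessarily. The two-line proof you already have---$x\nsubseteq F_G$ implies $r(F_G+x)=r(F_G)+1$, hence by Lemma~\ref{Uguagl} $r(G+x)=r(G)+1$, contradiction---is complete as it stands.
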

\begin{proof}
	Suppose, towards a contradiction, that $x \nsubseteq F_G$. We apply (R3) to $F_G$ and $G+x$:
	$$r(F_G+G+x)+r(F_G \cap (G+x)) \leq r(F_G)+r(G+x).$$
	Now since $G\subseteq F_G$ but $x\not\subseteq F_G$, the above inequality can be stated as
	$$r(F_G+x)+r(G) \leq r(F_G)+r(G).$$
	However, as $F_G$ is a flat, $r(F_G+x)=r(F_G)+1$, which gives the required contradiction.
\end{proof}

\begin{lemma}\label{EqRank}
	Let $(E,r)$ be a $q$-matroid and let $G \subseteq E $. Then $r(G)=r(F_G)$.
\end{lemma}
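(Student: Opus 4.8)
The plan is to produce a flat $H$ with $G\subseteq H$ and $r(H)=r(G)$. Since $F_G=C_{\mathcal F_r}(G)$ is the minimal flat containing $G$ (see the discussion following Lemma~\ref{F2dasolo}), this forces $F_G\subseteq H$, and then monotonicity (R2) applied to $G\subseteq F_G\subseteq H$ gives $r(G)\le r(F_G)\le r(H)=r(G)$, proving the claim. The natural candidate is $H:=\langle x : x \text{ one-dimensional},\ r(G+x)=r(G)\rangle$. Certainly $G\subseteq H$, since every one-dimensional subspace of $G$ satisfies the defining condition.

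The first task is to show $r(H)=r(G)$. I would begin by recording an immediate consequence of Lemma~\ref{Uguagl}: if $A\subseteq B$ with $r(A)=r(B)$, and $x$ is one-dimensional with $r(A+x)=r(A)$, then $r(B+x)=r(B)$; indeed, otherwise Theorem~\ref{th:JP18prel}(1) forces $r(B+x)=r(B)+1$, whence Lemma~\ref{Uguagl} gives $r(A+x)=r(A)+1$, a contradiction. Now, because $E$ is finite-dimensional, $H$ is spanned by finitely many one-dimensional subspaces $x_1,\dots,x_m$, each satisfying $r(G+x_i)=r(G)$. Put $G_0:=G$ and $G_i:=G_{i-1}+x_i$. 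By induction on $i$, applying the recorded fact with $A=G$ and $B=G_{i-1}$ (legitimate since $r(G_{i-1})=r(G)$ by the inductive hypothesis and $r(G+x_i)=r(G)$), we get $r(G_i)=r(G)$ for all $i$. As $G_m=G+x_1+\cdots+x_m=H$, this yields $r(H)=r(G)$.

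Next I would verify that $H$ is a flat of $(E,r)$. Let $x$ be one-dimensional with $x\nsubseteq H$. By the definition of $H$ we cannot have $r(G+x)=r(G)$, so $r(G+x)=r(G)+1$ by Theorem~\ref{th:JP18prel}(1). Since $G\subseteq H$, (R2) gives $r(H+x)\ge r(G+x)=r(G)+1=r(H)+1$, while Theorem~\ref{th:JP18prel}(1) gives $r(H+x)\le r(H)+1$; hence $r(H+x)=r(H)+1>r(H)$, so $H$ is a flat. Then $H$ is a flat containing $G$, so Lemma~\ref{MinimalFlat}(2) gives $F_G\subseteq H$, and the chain of inequalities above completes the proof.

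I do not expect a real obstacle here; the one subtle point is the induction establishing $r(H)=r(G)$, where one must resist applying Theorem~\ref{th:JP18prel}(2) directly with $B=H$ (a general line of $H$ need not be one of the spanning lines $x_i$) and instead build $H$ up one line at a time, invoking Lemma~\ref{Uguagl} at each stage. The degenerate cases $G=\{0\}$ and $H=E$ are covered vacuously by the same argument.
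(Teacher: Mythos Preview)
Your proof is correct and follows essentially the same route as the paper: both construct the subspace spanned by all one-dimensional $x$ with $r(G+x)=r(G)$ (your $H$, the paper's $U$), show it is a flat of rank $r(G)$ containing $G$, and conclude via the minimality of $F_G$. The paper frames the endgame as a contradiction and invokes Theorem~\ref{th:JP18prel}(\ref{th:JP18prel:part2}) directly for $r(U)=r(G)$ (and uses Lemma~\ref{minFl} to handle lines outside $F_G$), whereas you argue directly and supply the step-by-step induction via Lemma~\ref{Uguagl}; your caution that Theorem~\ref{th:JP18prel}(\ref{th:JP18prel:part2}) does not apply out of the box---since an arbitrary line of $H$ need not be one of the defining lines---is well taken, and your uniform treatment of all $x\nsubseteq H$ sidesteps the case split the paper performs.
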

\begin{proof}
	Consider the collection of subspaces $$\mathcal{H}:=\{y\subseteq E:\, \dim(y)=1, r(G+y)=r(G)\}.$$ Let $U$ be the vector space sum of the elements of $\mathcal{H}$.
	By applying Theorem~\ref{th:JP18prel} Part \ref{th:JP18prel:part2}, we have that $r(U)=r(G)$.
	Moreover, $U\subseteq F_G$ by Lemma \ref{minFl}.
	
	Suppose $r(G)<r(F_G)$. If $U=F_G$ then we would arrive at the contradiction $r(U)=r(F_G)>r(G)$, so assume otherwise. 
	Then there exists a one-dimensional subspace $x \subseteq F_G$, $x\not \subseteq U$. Since $x \notin \mathcal{H}$ and $G\subseteq U$, by (R2) we have
	$$r(U)=r(G) < r(G+x) \leq r(U+x). $$
	On the other hand, Lemma \ref{minFl} tells us that for a one-dimensional subspace $x'\subseteq E$, $x'\not\subseteq F_G$ we have
	$$r(U)=r(G)<r(G+x')\leq r(U+x').$$
	Therefore $U$ is itself a flat and $G \subseteq U \subsetneq F_G$, contradicting the minimality of $F_G$.
	We deduce that $r(G)=r(F_G)$.
\end{proof}

\begin{proposition}\label{flataxioms}
	The flats of a $q$-matroid satisfy the flat axioms (F1)-(F3) of Definition \ref{flat-axioms}.
\end{proposition}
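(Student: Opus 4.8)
The plan is to verify the three conditions of Definition~\ref{flat-axioms} for the family $\mathcal{F}_r$ of flats of $(E,r)$ in the sense of Definition~\ref{flat}. Axiom (F1) is vacuous: there is no one-dimensional subspace $x$ with $x\nsubseteq E$, so $E$ satisfies the defining property of a flat trivially. Axiom (F2) is exactly Lemma~\ref{F2dasolo}, which I would quote directly. The substance is (F3), and for it I would lean on the minimal-flat operator $F_{(-)}$ together with Lemma~\ref{EqRank}.

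For the existence part of (F3): fix a flat $F$ and a one-dimensional $x\nsubseteq F$. Since $F$ is a flat, $r(F+x)>r(F)$, and by Theorem~\ref{th:JP18prel} Part~\ref{th:JP18prel:part1} this forces $r(F+x)=r(F)+1$. Put $F':=F_{F+x}$, the unique minimal flat containing $F+x$ (it is a flat by the remark following Lemma~\ref{MinimalFlat}); then $x\subseteq F'$, so $F\subsetneq F'$, and $r(F')=r(F+x)=r(F)+1$ by Lemma~\ref{EqRank}. It then remains to show $F'$ covers $F$ in the sense of Definition~\ref{Cover}. I would take an arbitrary flat $F''$ with $F\subseteq F''\subseteq F'$ and argue $F''\in\{F,F'\}$ by cases on whether $x\subseteq F''$. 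If $x\subseteq F''$, then $F+x\subseteq F''$, so $F'\subseteq F''$ by the minimality of $F'$ (Lemma~\ref{MinimalFlat}(2)), giving $F''=F'$. If $x\nsubseteq F''$, then, $F''$ being a flat (and again by Theorem~\ref{th:JP18prel} Part~\ref{th:JP18prel:part1}), $r(F''+x)=r(F'')+1$; but $F''+x\subseteq F'$, so (R2) gives $r(F'')+1\le r(F')=r(F)+1$, and together with $r(F)\le r(F'')$ from (R2) this yields $r(F'')=r(F)$. Then for every one-dimensional $y\subseteq F''$ one has $r(F+y)\le r(F'')=r(F)$, forcing $y\subseteq F$ (otherwise the flat property of $F$ is violated); since $F''$ is the sum of its one-dimensional subspaces, $F''\subseteq F$, i.e. $F''=F$. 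Hence $F'$ is a flat covering $F$ and containing $x$.

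For uniqueness in (F3): if $F'$ and $F''$ are flats, each covering $F$ and each containing $x$, then $F'\cap F''$ is a flat by Lemma~\ref{F2dasolo}; it contains both $F$ and $x$, so $F'\cap F''\ne F$, and since $F\subseteq F'\cap F''\subseteq F'$ with $F'$ covering $F$, we must have $F'\cap F''=F'$; symmetrically $F'\cap F''=F''$, whence $F'=F''$. I expect the only genuinely delicate point to be the covering argument in the existence part, specifically the case $x\nsubseteq F''$, where one has to combine monotonicity, submodularity (via Lemma~\ref{EqRank} and the rank of $F''+x$), and the flat property of $F$ to pin $F''$ down exactly; every other step is vacuous, a direct citation, or a short manipulation in the subspace lattice.
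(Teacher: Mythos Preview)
Your proof is correct. The handling of (F1), (F2), and the uniqueness half of (F3) is essentially identical to the paper's. For the existence part of (F3) the two arguments diverge: the paper proceeds by contradiction, assuming that $x$ lies in no cover of $F$, locating a cover $F'$ of $F$ strictly inside $F_{F+x}$ with $x\nsubseteq F'$, picking a one-dimensional $y\subseteq F'$, $y\nsubseteq F$, and then invoking Lemma~\ref{minFl} on $H=F+y$ to conclude $x\subseteq F_H\subseteq F'$, a contradiction. Your argument is instead direct: you show that $F':=F_{F+x}$ itself covers $F$ by using Lemma~\ref{EqRank} to pin $r(F')=r(F)+1$, and then, for any intermediate flat $F''$, squeezing $r(F'')$ between $r(F)$ and $r(F')$ via (R2) and the flat property of $F''$ to force $F''\in\{F,F'\}$. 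Both routes rest on Lemma~\ref{EqRank}; yours avoids Lemma~\ref{minFl} and the auxiliary space $H$, which makes it slightly more self-contained, while the paper's route showcases Lemma~\ref{minFl} in action.
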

\begin{proof}
	Let $(E,r)$ be a $q$-matroid with rank function $r$.
	By definition, the set of flats ${\mathcal F}_r$ of $(E,r)$ is characterised by:
	$$\mathcal{F}_r:=\{F \subseteq E : r(F+x)>r(F),\, \forall x \nsubseteq F,\, \dim(x)=1\}. $$
	The condition (F1) holds vacuously,
	while (F2) comes from Lemma \ref{F2dasolo}.
	
	To prove (F3), let $F\subseteq E$ and $x \subseteq E$ with $\dim(x)=1$ and $x \nsubseteq F$.
	We will show that there is a unique $F'$ covering $F$ and containing $x$.
	Suppose, towards a contradiction, that $x$ is not contained in any flat covering  $F$. Let $G=F+x$ and consider $F_G$, the minimal flat containing $G$. By our assumption, there must be a flat $F'$ such that $F \subsetneq F' \subsetneq F_G$.
	Without loss of generality, we may assume that $F'$ is a cover of $F$. Clearly $x \nsubseteq F'$. 
	Let $y$ be  a one-dimensional space $y\subseteq F'$, $y \nsubseteq F$. 
	Now, $x,y\nsubseteq F$ and $y\subseteq F_G$. 
	Let $H=F+y$. We claim that $x \subseteq F_H$, in which case we would arrive at the contradiction $x \subseteq F_H \subseteq F'$ and $x \nsubseteq F'$. 
	Since $G=F+x$, $H=F+y$,  $x,y\nsubseteq F$ and $F$ is a flat, we have $r(G)=r(H)=r(F)+1$. 
	By Lemma \ref{EqRank}, $r(G)=r(F_G)$ and since $y\subseteq F_G$ we also have $r(G)=r(G+y)=r(F_G)$.
	Now, $$r(H+x)=r(F+x+y)=r(G+y)=r(G)=r(F)+1=r(H).$$ Hence by Lemma \ref{minFl}, $x \subseteq F_H$.
	We deduce that $x$ is contained in a cover of $F$.
	As regards uniqueness, 
	suppose we have two different covers $F_1 \neq F_2$ of $F$ containing $x$ and let $L:=F_1\cap F_2$.
	By the flat axiom (F2), $L$ is a flat and since $x,F \subseteq F_1,F_2$ then $x,F \subseteq L$. On the other hand, $F \neq L$ since $x \nsubseteq F$, so $F \subsetneq L$.
	Since $F_1 \neq F_2$, $L$ cannot be equal to both of them; say $L \neq F_2$, so $F \subsetneq L \subsetneq F_2$, which contradicts the fact that $F_2$ covers $L$.
\end{proof}

Our aim is to prove the converse of Proposition \ref{flataxioms}: that is, if we have a collection of flats $\mathcal{F}$ that satisfies the axioms (F1)-(F3), it is the collection of flats of a $q$-matroid. The next lemma will be used frequently in our proofs.

\begin{lemma}\label{cover}
	Let $\mathcal{F}$ be a collection of flats. Let $F\in\mathcal{F}$ and let $x\subseteq E$ be a one-dimensional subspace. Then the minimal member of $\mathcal{F}$ containing $F+x$ is either equal to $F$ or it covers $F$.
\end{lemma}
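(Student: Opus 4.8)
The plan is to argue directly about the containment structure of flats in $\mathcal{F}$ between $F$ and the minimal flat $F_{F+x}$ containing $F+x$. Write $G = F+x$ and let $F_G = C_{\mathcal{F}}(G)$ be its minimal flat as in Lemma \ref{MinimalFlat}; note $F \subseteq G \subseteq F_G$, and $F_G$ is indeed a flat by hypothesis that $\mathcal{F}$ satisfies the flat axioms (here we only need that $C_{\mathcal{F}}(G)\in\mathcal{F}$, which follows from (F2) since $\mathcal{F}$ is closed under intersection). If $x \subseteq F$ then $G = F$, so $F_G = F$ and we are in the first case. So assume $x \not\subseteq F$, and we must show $F_G$ covers $F$, i.e. there is no flat strictly between $F$ and $F_G$.

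Suppose, towards a contradiction, that there is a flat $F'$ with $F \subsetneq F' \subsetneq F_G$. The key observation is to examine whether $x \subseteq F'$. Since $F' \subsetneq F_G$ and $F_G$ is the minimal flat containing $G = F+x$, we cannot have $G \subseteq F'$; as $F \subseteq F'$, this forces $x \not\subseteq F'$. Now I want to derive a contradiction from the existence of this $F'$. Since $x \not\subseteq F'$, apply flat axiom (F3) to $F'$ and $x$: there is a unique flat $F''$ covering $F'$ with $x \subseteq F''$. Then $F' \subsetneq F''$ and $x \subseteq F''$, so $G = F + x \subseteq F'' $ (using $F \subseteq F' \subseteq F''$), hence $F_G \subseteq F''$ by minimality of $F_G$. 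Combined with $F' \subsetneq F_G$ this gives $F' \subsetneq F_G \subseteq F''$, so $F_G$ is a flat strictly between $F'$ and $F''$, contradicting that $F''$ covers $F'$. This contradiction shows no such intermediate flat $F'$ exists, so $F_G$ covers $F$.

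The main obstacle — really the only delicate point — is making sure the argument stays purely at the level of the lattice of flats and uses only (F1)–(F3), since in this lemma we are given an abstract collection $\mathcal{F}$ and have not yet established that it comes from a rank function; in particular I must avoid invoking Lemma \ref{minFl} or Lemma \ref{EqRank}, which presuppose a $q$-matroid $(E,r)$. Everything needed is already available: $C_{\mathcal{F}}$ and its minimality property from Lemma \ref{MinimalFlat}, closure under intersection from (F2), and the covering/uniqueness statement from (F3). One should also handle cleanly the trivial case $F + x = F$ at the outset so that the ``either equal to $F$'' alternative is genuinely covered, and note that $F_G \neq F$ in the nontrivial case precisely because $x \subseteq F_G$ while $x \not\subseteq F$, so the two alternatives in the statement are mutually exclusive there.
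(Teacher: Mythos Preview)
Your argument has a gap at the final step. From $F' \subsetneq F_G \subseteq F''$ together with the fact that $F''$ covers $F'$, you cannot conclude that $F_G$ lies \emph{strictly} between $F'$ and $F''$: you have not excluded $F_G = F''$. In fact $F_G = F''$ is forced here, since $F''$ covering $F'$ means every flat in the interval $[F',F'']$ equals $F'$ or $F''$, and $F_G \neq F'$ by assumption. So what your argument actually establishes is that $F_G$ covers $F'$ --- which is perfectly compatible with the hypothesis $F \subsetneq F' \subsetneq F_G$ and yields no contradiction.

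The remedy is to apply (F3) to $F$ itself rather than to the hypothetical intermediate flat $F'$; this is exactly what the paper does. If $x \not\subseteq F$, axiom (F3) produces a unique cover $\hat F$ of $F$ with $x \subseteq \hat F$. Then $F+x \subseteq \hat F$, so $F_G = C_{\mathcal F}(F+x) \subseteq \hat F$; combined with $F \subsetneq F_G$ (since $x \subseteq F_G$ but $x \not\subseteq F$) and the fact that $\hat F$ covers $F$, this forces $F_G = \hat F$. Hence $F_G$ covers $F$, directly and without any proof by contradiction. Your detour through an intermediate $F'$ and a second invocation of (F3) at $F'$ is not only unnecessary but, as written, does not close.
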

\begin{proof}
	If $x\subseteq F$, then $F+x=F$ so the minimal member of $\mathcal{F}$ containing $F+x$ is $F$ itself. If $x\not\subseteq F$, then by (F3) there is a unique $F'\in\mathcal{F}$ that covers $F$ and contains $x$. Since $F'$ covers $F$ and contains both $F$ and $x$, it is clearly the minimal member of $\mathcal{F}$ containing $F+x$.
\end{proof}

Next we show that the members of $\mathcal{F}$ form a semimodular lattice. (The flats of a $q$-matroid form in fact a geometric lattice, as was noted in Theorem 1 of \cite{BCJ17}.)

\begin{theorem}\label{semimodularLat}
    Let $\mathcal{F}$ be a collection of flats.
	Then its members form a semimodular lattice under inclusion, where for any two $F_1,F_2\in\mathcal{F}$ the meet is defined to be $F_1\wedge F_2 :=F_1\cap F_2$ and the join $F_1\vee F_2$ is $C_\mathcal{F}(F_1+F_2)$.
\end{theorem}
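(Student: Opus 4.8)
The plan is to establish four things in turn: that $\mathcal{F}$ is closed under (finite) intersection, so that $C_\mathcal{F}$ always outputs a member of $\mathcal{F}$; that $F_1\cap F_2$ is the meet; that $C_\mathcal{F}(F_1+F_2)$ is the join; and finally the semimodularity condition, which is the only part requiring real work.

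First I would note that iterating (F2) shows that every finite intersection of members of $\mathcal{F}$ again lies in $\mathcal{F}$; since $\mathcal{L}(E)$ is finite and $E\in\mathcal{F}$ by (F1), the family $\{F\in\mathcal{F}:A\subseteq F\}$ is nonempty and finite, so $C_\mathcal{F}(A)$ is a well-defined element of $\mathcal{F}$, and by Lemma~\ref{MinimalFlat} it is the unique minimal flat containing $A$. The meet claim is then immediate: $F_1\cap F_2\in\mathcal{F}$ by (F2), it is a lower bound of $F_1,F_2$ in $(\mathcal{F},\subseteq)$, and any flat contained in both of them is contained in their intersection. For the join, $C_\mathcal{F}(F_1+F_2)$ is a flat containing $F_1+F_2$, hence an upper bound of $F_1$ and $F_2$; and if $G\in\mathcal{F}$ contains both $F_1$ and $F_2$ then $F_1+F_2\subseteq G$, so $C_\mathcal{F}(F_1+F_2)\subseteq G$ by Lemma~\ref{MinimalFlat}(2). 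Thus $C_\mathcal{F}(F_1+F_2)$ is the least upper bound, and $(\mathcal{F},\subseteq,\vee,\wedge)$ is a lattice with the stated operations.

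The substantive part is semimodularity, i.e.\ that if $F_1$ covers $F_1\wedge F_2=F_1\cap F_2=:F$ then $C_\mathcal{F}(F_1+F_2)$ covers $F_2$. Since $F_1$ covers $F$ we have $F_1\ne F_2$ and in fact $F_1\not\subseteq F_2$ (otherwise $F_1\cap F_2=F_1$, forcing $F_1$ to cover itself), so I can pick a one-dimensional subspace $x\subseteq F_1$ with $x\not\subseteq F_2$; as $x\subseteq F_1$ this also gives $x\not\subseteq F$. I would first show $F_1=C_\mathcal{F}(F+x)$: the flat $C_\mathcal{F}(F+x)$ strictly contains $F$ (it contains $x\not\subseteq F$) and is contained in $F_1$ (which is a flat containing $F+x$), so it equals $F_1$ because $F_1$ covers $F$. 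Next, since $x\not\subseteq F_2$, Lemma~\ref{cover} gives that $C_\mathcal{F}(F_2+x)$ covers $F_2$. It then remains to prove the key identity $C_\mathcal{F}(F_2+x)=C_\mathcal{F}(F_1+F_2)$: the inclusion $\subseteq$ follows from $F_2+x\subseteq F_1+F_2$ together with Lemma~\ref{MinimalFlat}(3); for $\supseteq$, observe $F+x\subseteq F_2+x\subseteq C_\mathcal{F}(F_2+x)$, so $F_1=C_\mathcal{F}(F+x)\subseteq C_\mathcal{F}(F_2+x)$ by Lemma~\ref{MinimalFlat}(2), hence $F_1+F_2\subseteq C_\mathcal{F}(F_2+x)$ and therefore $C_\mathcal{F}(F_1+F_2)\subseteq C_\mathcal{F}(F_2+x)$ by Lemma~\ref{MinimalFlat}(2) again. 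Combining, $F_1\vee F_2=C_\mathcal{F}(F_1+F_2)=C_\mathcal{F}(F_2+x)$ covers $F_2$, which is exactly semimodularity.

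The main obstacle is precisely the identity $C_\mathcal{F}(F_1+F_2)=C_\mathcal{F}(F_2+x)$: the point is that adjoining a single well-chosen line $x\subseteq F_1$ with $x\not\subseteq F_2$ to $F_2$ already generates the flat $C_\mathcal{F}(F_1+F_2)$. This is where the covering hypothesis on $F_1$ enters, via the identification $F_1=C_\mathcal{F}(F+x)$; everything else is bookkeeping with Lemma~\ref{MinimalFlat} and Lemma~\ref{cover}.
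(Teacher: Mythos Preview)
Your proof is correct and follows essentially the same route as the paper's: verify meet and join are well defined via (F2) and Lemma~\ref{MinimalFlat}, then for semimodularity pick a line $x\subseteq F_1$ with $x\not\subseteq F_2$, identify $F_1=C_\mathcal{F}((F_1\cap F_2)+x)$, set $H=C_\mathcal{F}(F_2+x)$, and show $H=F_1\vee F_2$, whence Lemma~\ref{cover} gives that $F_1\vee F_2$ covers $F_2$. Your justification that $C_\mathcal{F}(A)\in\mathcal{F}$ (via finiteness of $\mathcal{L}(E)$ and iterated (F2)) is in fact slightly more explicit than what the paper writes.
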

\begin{proof}
	The members of $\mathcal{F}$ clearly form a poset with respect to inclusion. We prove that the definitions of meet and join as 
	$F_1\wedge F_2:=F_1\cap F_2$ and 
	$F_1\vee F_2 := C_\mathcal{F}(F_1+F_2)$ are well defined.
	
	Let us consider the meet.
	From (F2) $F_1 \wedge F_2$ is in $\mathcal{F}$ and the fact that it is the greatest lower bound of $F_1$ and $F_2$ follows from the definition of intersection. As regards the join, $C_\mathcal{F}(F_1+F_2)$ is in $\mathcal{F}$ by Lemma \ref{MinimalFlat} and, more precisely, is the unique minimal member of $\mathcal{F}$ containing $F_1+F_2$.
	We remark that, since we have a lattice, there is a maximal member of $\mathcal{F}$, which is $E$, and a minimal one, that is $\cap\{F \in \mathcal{F}\}$, which is also the minimal member of $\mathcal{F}$ containing the zero space. 
	
	In order to prove that the lattice is semimodular, we have to prove that if $F_1\wedge F_2$ is covered by $F_1$, then $F_2$ is covered by $F_1\vee F_2$. So let $F_1\cap F_2\in\mathcal{F}$ be covered by $F_1$. Then for all 
	one-dimensional subspaces $x\subseteq F_1$ but $x\nsubseteq F_2$ we have that the minimal member of $\mathcal{F}$ containing $(F_1\cap F_2)+ x$ is $F_1$ by Lemma \ref{cover}. Since $F_2+x\subseteq F_2+F_1$, we have that the minimal $H\in\mathcal{F}$ containing $F_2+ x$ satisfies $H \leq F_2\vee F_1$. 
	On the other hand, because $(F_1\cap F_2)+x\subseteq F_2+x$, we have that 
	$F_1 \leq H$. Now we have that both 
	$F_1, F_2 \leq H$ so $H$ must contain the least upper bound of the two, that is, $H\geq F_1\vee F_2$. We conclude that $H=F_1\vee F_2$, which means $F_1\vee F_2$ covers $F_2$ by Lemma \ref{cover}. This proves that the lattice of a collection of flats $\mathcal{F}$ is semimodular.
\end{proof}

Since the lattice of a collection of flats is semimodular, we can deduce the following corollary (see \cite[Prop.~3.3.2]{stanley:1997}, \cite[Prop.~3.7]{stanley:2007} or \cite[Prop.~2.1]{aigner1979combinatorial}.)

\begin{corollary}\label{JordanDedekind}
	The lattice of a collection of flats $\mathcal{F}$ satisfies the Jordan-Dedekind property, that is: all maximal chains between two fixed elements of the lattice have the same finite length.
\end{corollary}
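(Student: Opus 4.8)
The plan is to obtain this statement as an immediate consequence of Theorem~\ref{semimodularLat}, which shows that the lattice of a collection of flats $\mathcal{F}$ is semimodular, together with the observation that this lattice has finite length: every member of $\mathcal{F}$ is a subspace of the $n$-dimensional space $E$, so a strictly ascending chain of flats has at most $n+1$ terms. The Jordan--Dedekind chain condition for a semimodular lattice of finite length is classical, being exactly the content of the cited \cite[Prop.~3.3.2]{stanley:1997}, \cite[Prop.~3.7]{stanley:2007} and \cite[Prop.~2.1]{aigner1979combinatorial}, so in principle it suffices to quote these; the point is merely to confirm that the hypotheses (semimodularity, finite length) are met by the lattice built in Theorem~\ref{semimodularLat}.

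Should a self-contained argument be wanted, I would argue by induction on $r$, proving the statement: if an interval $[F,G]$ of $\mathcal{F}$ admits a maximal chain of length $r$, then every maximal chain in $[F,G]$ has length $r$. Here the covering relation of the lattice is exactly the covering relation of Definition~\ref{Cover}, by Lemma~\ref{cover}. The cases $r=0,1$ are immediate from the definition of covering. For the inductive step, take maximal chains $F=X_0\lessdot X_1\lessdot\cdots\lessdot X_r=G$ and $F=Y_0\lessdot Y_1\lessdot\cdots\lessdot Y_s=G$. If $X_1=Y_1$, apply the inductive hypothesis to $[X_1,G]$, which has the maximal chain $X_1\lessdot\cdots\lessdot X_r$ of length $r-1<r$. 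Otherwise $X_1\neq Y_1$ and both cover $F$, so $X_1\wedge Y_1=F$ (the flat $X_1\wedge Y_1$ lies below $X_1$ and cannot equal it, since $X_1\le Y_1$ would contradict $F\lessdot Y_1$; it also lies below $Y_1$); by semimodularity $Z:=X_1\vee Y_1$ covers both $X_1$ and $Y_1$. Picking any maximal chain $Z=Z_0\lessdot\cdots\lessdot Z_m=G$, the chain $X_1\lessdot Z\lessdot Z_1\lessdot\cdots\lessdot G$ is a maximal chain in $[X_1,G]$, so the inductive hypothesis applied there gives $m+1=r-1$; the chain $Y_1\lessdot Z\lessdot Z_1\lessdot\cdots\lessdot G$ then has length $r-1<r$, so the inductive hypothesis applied to $[Y_1,G]$ forces $s-1=r-1$, i.e.\ $s=r$.

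Since semimodularity---the only substantial ingredient---has already been established in Theorem~\ref{semimodularLat}, there is no genuine obstacle here. The only points meriting attention are the finiteness of chain length, which is automatic from $\dim E=n<\infty$, and the bookkeeping in the inductive step ensuring the inductive hypothesis is invoked only on intervals that possess a maximal chain strictly shorter than~$r$.
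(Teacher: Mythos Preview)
Your approach is exactly that of the paper: the corollary is stated as an immediate consequence of Theorem~\ref{semimodularLat} together with the cited references \cite[Prop.~3.3.2]{stanley:1997}, \cite[Prop.~3.7]{stanley:2007}, \cite[Prop.~2.1]{aigner1979combinatorial}, with no further argument given. Your optional self-contained induction is the standard proof of the Jordan--Dedekind condition for semimodular lattices and is correct as written; the paper does not include it.
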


In what follows, we will need the following lemma.
\begin{lemma}\label{GettingFA}
	Let $A$ be a subspace of $E$ and let $\mathcal{F}$ be a collection of subspaces of $E$. 
	Let $x \subseteq A$ have dimension one and let $F\subseteq A$ be an element of $\mathcal{F}$.
	Let $F'$ be the minimal element of $\mathcal{F}$ containing $x+F$.
	If $A \subseteq F'$ then $F'=C_{\mathcal {F}}(A)$.
\end{lemma}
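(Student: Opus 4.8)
\textbf{Proof plan for Lemma~\ref{GettingFA}.}
The goal is to show that $F'$, the minimal element of $\mathcal{F}$ containing $x+F$, equals $C_{\mathcal{F}}(A)$ under the hypothesis $A\subseteq F'$. Recall that $C_{\mathcal{F}}(A)=\cap\{G\in\mathcal{F}: A\subseteq G\}$, so $C_{\mathcal{F}}(A)$ is by construction the \emph{smallest} member of $\mathcal{F}$ containing $A$ (by Lemma~\ref{MinimalFlat}, parts (1) and (2), provided such members exist — which they do, since the hypothesis $A\subseteq F'$ exhibits at least one). The plan is therefore to prove the two inclusions $F'\subseteq C_{\mathcal{F}}(A)$ and $C_{\mathcal{F}}(A)\subseteq F'$ separately.

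First I would establish $F'\subseteq C_{\mathcal{F}}(A)$. Since $x\subseteq A$ and $F\subseteq A$, we have $x+F\subseteq A\subseteq C_{\mathcal{F}}(A)$, and $C_{\mathcal{F}}(A)\in\mathcal{F}$ by Lemma~\ref{MinimalFlat} (it is a flat whenever $\mathcal{F}$ satisfies the flat axioms, which is the standing assumption here). As $F'$ is the \emph{minimal} member of $\mathcal{F}$ containing $x+F$, and $C_{\mathcal{F}}(A)$ is a member of $\mathcal{F}$ containing $x+F$, minimality gives $F'\subseteq C_{\mathcal{F}}(A)$.

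For the reverse inclusion $C_{\mathcal{F}}(A)\subseteq F'$, I would use the hypothesis directly: by assumption $A\subseteq F'$, and $F'\in\mathcal{F}$; but $C_{\mathcal{F}}(A)$ is the intersection of \emph{all} members of $\mathcal{F}$ containing $A$, hence is contained in each such member, in particular in $F'$. (Equivalently, apply Lemma~\ref{MinimalFlat}(2) with the flat $F'$ in place of the generic $F$.) Combining the two inclusions yields $F'=C_{\mathcal{F}}(A)$, as required.

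This argument is essentially a formal manipulation of the minimality characterisations of $F'$ and of $C_{\mathcal{F}}(A)$, so there is no real obstacle; the only point requiring a moment's care is to confirm that $C_{\mathcal{F}}(A)$ is a genuine element of $\mathcal{F}$ and not merely a subspace — this is where Lemma~\ref{MinimalFlat} (together with the flat axioms satisfied by $\mathcal{F}$) is invoked, and it also implicitly uses that the family $\{G\in\mathcal{F}:A\subseteq G\}$ is nonempty, which is guaranteed by the hypothesis $A\subseteq F'$.
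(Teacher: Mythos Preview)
Your proof is correct and follows essentially the same route as the paper's: both arguments establish the two inclusions $C_{\mathcal{F}}(A)\subseteq F'$ (from $A\subseteq F'\in\mathcal{F}$ and the definition of $C_{\mathcal{F}}(A)$ as an intersection) and $F'\subseteq C_{\mathcal{F}}(A)$ (from $x+F\subseteq A\subseteq C_{\mathcal{F}}(A)$ and the minimality of $F'$). The paper simply presents these in the opposite order and with slightly less commentary; your extra care in noting why $C_{\mathcal{F}}(A)\in\mathcal{F}$ and why the defining intersection is nonempty is a welcome clarification rather than a departure.
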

\begin{proof}
	If $A \subseteq F' \in \mathcal{F}$, we have $C_{\mathcal {F}}(A) \subseteq F'$ by definition.
	Then since $F+x \subseteq A$ we have $F+x \subseteq C_{\mathcal {F}}(A) \subseteq F'$.
	Since $F'$ is the the minimal flat containing $
	F$ and $x$, $F' \subseteq C_{\mathcal {F}}(A)$, implying their equality.
\end{proof}

For each $A\subseteq E$, let $r_{\mathcal{F}}(A)$ denote the length of a maximal chain of flats from $C_{\mathcal {F}}(\{0\})$ to $C_{\mathcal {F}}(A)$. By Corollary~\ref{JordanDedekind}, all such maximal chains have the same length, so $r_{\mathcal{F}}$ is well defined as a function on $\mathcal{L}(E)$. We are now ready to prove our main theorem. 

\begin{theorem}\label{crypto:flat-rank}
	Let $E$ be a finite dimensional space. If $\mathcal{F}$ is a family of subspaces of $E$ that satisfies the flat axioms (F1)-(F3)
	and for each $A \subseteq E$, then $(E,r_\mathcal{F})$ is a $q$-matroid and its family of flats is $\mathcal{F}$. 
	Conversely, for a given $q$-matroid $(E,r)$, $\mathcal{F}_r$ satisfies the conditions (F1)-(F3) and $r=r_{\mathcal{F}_r}$.
\end{theorem}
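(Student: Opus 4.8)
The plan is to prove the two directions separately, leaning heavily on the machinery already assembled. For the forward direction, suppose $\mathcal{F}$ satisfies (F1)--(F3) and define $r_\mathcal{F}(A)$ as the common length of a maximal chain of flats from $C_\mathcal{F}(\{0\})$ to $C_\mathcal{F}(A)$, which is well defined by Corollary~\ref{JordanDedekind} together with Theorem~\ref{semimodularLat}. I would verify (R1)--(R3) in turn. For (R2), monotonicity, I would use Lemma~\ref{MinimalFlat}\eqref{MinimalFlatPart3}: if $A\subseteq B$ then $C_\mathcal{F}(A)\subseteq C_\mathcal{F}(B)$, and a maximal chain up to $C_\mathcal{F}(A)$ extends to one up to $C_\mathcal{F}(B)$, so $r_\mathcal{F}(A)\le r_\mathcal{F}(B)$. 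For the upper bound $r_\mathcal{F}(A)\le\dim A$ in (R1), the key observation is that writing $A$ as $\{0\}\subseteq x_1\subseteq x_1+x_2\subseteq\cdots$ built up one dimension at a time, each step increases $C_\mathcal{F}(\cdot)$ by at most a cover (Lemma~\ref{cover} applied to $C_\mathcal{F}$ of the previous stage together with the new line), so the rank grows by at most $1$ per added dimension; the lower bound $r_\mathcal{F}(A)\ge 0$ is immediate.

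The submodularity axiom (R3) is the step I expect to be the main obstacle, since it is the one genuinely global inequality and the only one not reducible to a one-line lattice fact. The natural route is to show that $r_\mathcal{F}$ is the rank function of the semimodular lattice of flats in the sense that $r_\mathcal{F}(A)=\rho(C_\mathcal{F}(A))$ where $\rho$ is the height function of that lattice, and then invoke the standard fact that the height function of a semimodular lattice is submodular — i.e. $\rho(F_1\vee F_2)+\rho(F_1\wedge F_2)\le\rho(F_1)+\rho(F_2)$ for lattice elements. Combining this with $C_\mathcal{F}(A+B)=C_\mathcal{F}(A)\vee C_\mathcal{F}(B)$ (which follows from Lemma~\ref{MinimalFlat}: $C_\mathcal{F}(A)\vee C_\mathcal{F}(B)$ is the minimal flat containing both $A$ and $B$, hence containing $A+B$) and $C_\mathcal{F}(A\cap B)\subseteq C_\mathcal{F}(A)\wedge C_\mathcal{F}(B)$ with monotonicity of $\rho$, one gets
\[
r_\mathcal{F}(A+B)+r_\mathcal{F}(A\cap B)\le \rho(C_\mathcal{F}(A)\vee C_\mathcal{F}(B))+\rho(C_\mathcal{F}(A)\wedge C_\mathcal{F}(B))\le r_\mathcal{F}(A)+r_\mathcal{F}(B).
\]
If the paper does not wish to cite submodularity of semimodular-lattice height functions as a black box, the alternative is to prove it inline by the usual diamond argument, inducting on $\rho(F_1)-\rho(F_1\wedge F_2)$ and using the covering property (F3)-driven statement in Theorem~\ref{semimodularLat} at each step; either way this is where the real work sits.

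Having established that $(E,r_\mathcal{F})$ is a $q$-matroid, I would next identify its flats with $\mathcal{F}$. One inclusion: if $F\in\mathcal{F}$ and $x\not\subseteq F$ is a line, then by (F3) the minimal flat containing $F+x$ is a cover of $F$, so $C_\mathcal{F}(F+x)\supsetneq F=C_\mathcal{F}(F)$ strictly, whence $r_\mathcal{F}(F+x)=r_\mathcal{F}(F)+1>r_\mathcal{F}(F)$, so $F$ is a flat of $(E,r_\mathcal{F})$. Conversely, if $F$ is a flat of $(E,r_\mathcal{F})$, I claim $F=C_\mathcal{F}(F)$: if not, pick $x\subseteq C_\mathcal{F}(F)$ with $x\not\subseteq F$; then $C_\mathcal{F}(F+x)\subseteq C_\mathcal{F}(F)$, and by Lemma~\ref{cover} either $C_\mathcal{F}(F+x)=C_\mathcal{F}(F)$ or it covers it — but it cannot strictly exceed $C_\mathcal{F}(F)$, so $C_\mathcal{F}(F+x)=C_\mathcal{F}(F)$ has the same height, giving $r_\mathcal{F}(F+x)=r_\mathcal{F}(F)$, contradicting that $F$ is a flat. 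Hence $F=C_\mathcal{F}(F)\in\mathcal{F}$. For the converse direction of the theorem, Proposition~\ref{flataxioms} already gives that $\mathcal{F}_r$ satisfies (F1)--(F3), so it only remains to check $r=r_{\mathcal{F}_r}$; this follows from Lemma~\ref{EqRank}, which says $r(A)=r(F_A)$, combined with the observation that along a maximal chain of flats each cover raises $r$ by exactly $1$ — one direction being Lemma~\ref{minFl}/the flat property, the other being Theorem~\ref{th:JP18prel}\eqref{th:JP18prel:part1} — so that $r(F_A)$ equals the length of a maximal chain from $F_{\{0\}}$ to $F_A$, which is precisely $r_{\mathcal{F}_r}(A)$.
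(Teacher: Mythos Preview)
Your proposal is correct and follows essentially the same blueprint as the paper for (R1), (R2), the identification $\mathcal{F}_{r_\mathcal{F}}=\mathcal{F}$, and the converse direction. The one genuine difference is in the handling of (R3): you factor the argument through the standard lattice-theoretic fact that the height function of a semimodular lattice is submodular, combined with the identities $C_\mathcal{F}(A+B)=C_\mathcal{F}(A)\vee C_\mathcal{F}(B)$ and $C_\mathcal{F}(A\cap B)\subseteq C_\mathcal{F}(A)\wedge C_\mathcal{F}(B)$. The paper instead works by hand: it builds a maximal chain from $C_\mathcal{F}(A\cap B)$ to $C_\mathcal{F}(B)$ by adjoining lines of $B$ one at a time, builds a chain from $C_\mathcal{F}(A)$ to $C_\mathcal{F}(A+B)$ by adjoining the \emph{same} lines, and then argues that there is a surjection from the flats of the former chain onto those of the latter, so the latter is no longer. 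This is effectively an inline proof of the semimodular-height-is-submodular statement, specialised to the situation at hand. Your route is cleaner and separates the lattice theory from the $q$-matroid bookkeeping; the paper's route is more self-contained in that it does not cite an external lattice result. One minor remark: in your argument that a flat $F$ of $(E,r_\mathcal{F})$ lies in $\mathcal{F}$, the appeal to Lemma~\ref{cover} is both unnecessary and slightly misapplied (that lemma assumes its first argument already lies in $\mathcal{F}$); your own direct observation that $F+x\subseteq C_\mathcal{F}(F)$ forces $C_\mathcal{F}(F+x)=C_\mathcal{F}(F)$ already completes the step.
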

\begin{proof}
	Let $(E,r)$ be a $q$-matroid.
	We have seen in Proposition \ref{flataxioms} that $\mathcal{F}_r$ satisfies (F1)-(F3).
	
	Let now  $(E,\mathcal{F})$ be a family of flats.
	Write $F_0$ to denote $C_{\mathcal {F}}(\{0\})$. We will show that $r_{\mathcal{F}}$ satisfies (R1)-(R3), that is, that $(E,r_{\mathcal{F}})$ is a $q$-matroid.
	
	(R1): For a subspace $A$, $r_{\mathcal{F}}(A)\geq 0$ since $C_{\mathcal {F}}(A)$ is contained in any chain from $F_0$ to $C_{\mathcal {F}}(A)$. 
	If $A \subseteq F_0$ then $F_0=C_{\mathcal {F}}(A)$ and $r_{\mathcal{F}}(A)=0 \leq \dim (A)$, so the result clearly holds. 
	If $F_0$ does not contain $A$, then there is a one-dimensional space $x_0 \subseteq  A$, $x_0 \not\subseteq F_0$. 
	Let $G_0=F_0+x_0$ and define $F_1$ to be the minimal flat containing $F_0$ and $x_0$. $F_1$ clearly has dimension at least 1 and is a cover of $F_0$. Indeed if there is a flat $H$ such that $F_0 \subsetneq H \subsetneq F_1$, $H$ contains $F_0$ properly (otherwise $H=F_0$) and $x_0 \nsubseteq H$ (otherwise $H=F_1$). If it contains any element of $F_0+x_0$ that is not in $F_0$ it would contain $x_0$ itself as a subspace.
	If $A \subseteq F_1$, then by Lemma \ref{GettingFA} we have $F_1=C_{\mathcal {F}}(A)$ and the required maximal chain is $F_0 \subsetneq C_{\mathcal {F}}(A)$.
	If $A \nsubseteq F_1$ then choose $x_1 \subseteq A$ but $x_1 \nsubseteq F_1$ and define $F_2$ to be the unique cover of $G_1=F_1+x_1$; clearly $\dim(F_2) \geq 2$.
	We continue in this way, choosing at each step a one-dimensional subspace $x_i \subseteq A, x_i \not\subseteq F_{i}$ and construct the unique flat $F_{i+1}$
	covering $G_{i} = F_{i}+x_i$, until we arrive at a flat $F_k$ that contains $A$.
	By Lemma \ref{GettingFA}, we have $F_k=F_A$, yielding the maximal chain of flats $F_0 \subseteq F_1 \subseteq \cdots \subseteq F_k = C_{\mathcal {F}}(A)$. 
	Since $\dim(F_i)\geq i$ for each $i$, it follows that $r_{\mathcal{F}}(A) = k \leq \dim(A)$.
	
	(R2): Let $A \subseteq B$; we prove $r_{\mathcal{F}}(A) \leq r_{\mathcal{F}}(B)$.
	By Lemma \ref{MinimalFlat} Part \ref{MinimalFlatPart3}, $C_{\mathcal {F}}(A) \subseteq C_{\mathcal {F}}(B)$, therefore a maximal chain of flats $F_0\subseteq F_1\subseteq\cdots\subseteq C_{\mathcal {F}}(A)$ is contained in $C_{\mathcal {F}}(B)$.
	
	(R3): Let $A, B \subseteq E$ be subspaces and consider a maximal chain of flats $F_0 \subseteq \cdots \subseteq C_{\mathcal {F}}(A\cap B)$.
	If $C_{\mathcal {F}}(A\cap B)\neq C_{\mathcal {F}}(A)$ then choose a one-dimensional space $x_1\subseteq A, x_1 \not \subseteq C_{\mathcal {F}}(A\cap B)$ and continue extending the chain, by setting $G_1=C_{\mathcal {F}}(A\cap B)+x_1$ and taking $F_{1}=C_{\mathcal {F}}(G_1)$ and then
	repeating this procedure, each time choosing $x_i \subseteq A, 
	x_i\not\subseteq F_{i-1}$,
	where $F_i$ is the cover of
	$G_i=F_{i-1}+x_{i}$
	for each $i$.
	This sequence is clearly finite (in fact has length at most 
	$\dim(A)-\dim(C_{\mathcal{F}}(A\cap B))$,
	and by Lemma \ref{GettingFA}, there exists some $k$ such that $F_k = C_{\mathcal {F}}(A)$.
	Once we have a maximal chain terminating at $C_{\mathcal {F}}(A)$, if $B\nsubseteq C_{\mathcal {F}}(A)$, we repeat the same procedure, constructing a maximal chain terminating at 
	$C_{\mathcal {F}}(A+B)$.
	In the same way, from $F_0 \subseteq ... \subseteq C_{\mathcal {F}}(A\cap B)$, we construct a maximal chain terminating at $C_{\mathcal {F}}(B)$, which can be extended to a maximal chain terminating
	at $C_{\mathcal {F}}(A+ B)$.
	For any $y \subseteq C_{\mathcal {F}}(B)$, by (F2), the minimal flat containing $C_{\mathcal {F}}(A\cap B)+y$ is contained in the minimal flat containing $C_{\mathcal {F}}(A)+y$. Repeating this procedure gives us that every flat in the chain from $C_{\mathcal {F}}(A\cap B) $ to $C_{\mathcal {F}}(B)$ is contained in exactly one flat in the chain from $C_{\mathcal {F}}(A)$ to $C_{\mathcal {F}}(A+ B)$, and any flat in the latter chain contains at least one flat of the former chain. In other words, there is a surjection between flats in the chain from $C_{\mathcal {F}}(A\cap B) $ to $C_{\mathcal {F}}(B)$ and the flats in the chain from $C_{\mathcal {F}}(A)$ to $C_{\mathcal {F}}(A+B)$.
Therefore, the length of a maximal chain from $C_{\mathcal {F}}(A\cap B) $ to $C_{\mathcal {F}}(B)$ is longer than or equal to the length of a maximal chain from $C_{\mathcal {F}}(A)$ to $C_{\mathcal {F}}(A+ B)$. This yields
	$$r_{\mathcal{F}}(A+B)-r_{\mathcal{F}}(A) \leq r_{\mathcal{F}}(B)-r_{\mathcal{F}}(A\cap B),$$ and this proves (R3).
	
	The only thing that remains to be proved is that rank and flats defined as above compose correctly, namely $\mathcal{F}\to r\to\mathcal{F}'$ implies $\mathcal{F}=\mathcal{F}'$, and $r\to\mathcal{F}\to r'$ implies $r=r'$.
	Given a family $\mathcal{F}$ of flats satisfying (F1)-(F3), define $r(A)$ to be the length of a maximal chain $F_0\subseteq\cdots\subseteq C_{\mathcal {F}}(A)$. Then let $\mathcal{F}'=
	\mathcal{F}_r=\{F\subseteq E:r(F+x)>r(F), \forall x\not\subseteq F ,\dim(x)=1\}$. We want to show that $\mathcal{F}=\mathcal{F}'$. Let $F\in\mathcal{F}$, which means that $F=C_{\mathcal {F}}(F)$ is the endpoint of a maximal chain. Equivalently, for any one-dimensional subspace $x \subseteq E , x \nsubseteq F$, we have that a maximal chain for $F+x$ has to terminate at a flat that
	properly contains $F$ and so $r(F+x)>r(F)$. Thus $F\in\mathcal{F}'$. 
	
Conversely, if $r$ is a rank function satisfying (R1)-(R3), let $\mathcal{F}=\mathcal{F}_r$. Then let $r_\mathcal{F}(A)$ be the length of a maximal chain $F_0\subseteq\cdots\subseteq C_{\mathcal {F}}(A)$. We want to show that $r=r_\mathcal{F}$. This follows from the same reasoning as above: each element $F \in \mathcal{F}$ is the endpoint of a maximal chain and hence is strictly contained in the unique cover of $x+F$ for any $x \not\subseteq F$.
\end{proof}

\section{$q$-PMD's and Subspace Designs}\label{qPMD}

As an application of the cryptomorphism between the rank and flat axioms proved in Section~\ref{sec-cryptomorphisms}, we obtain the first example of $q$-PMD that has a classical analogue, namely the $q$-Steiner systems. Furthermore, we generalize a result of Murty \textit{et al.}~\cite{Young} and show that from the flats, independent subspaces and circuits of our $q$-PMD we derive subspace designs.
While only the $STS(13,2)$ parameters are known to be realisable to date, we have used it in our construction to obtain subspace designs for parameters that were not previously known to be realisable. Finally, we focus on the automorphism groups of the subspace designs that are considered in this section.

\subsection{$q$-Steiner Systems are $q$-PMDs}\label{q-steiner-q-PMD}      

We start by showing that a $q$-Steiner system gives a $q$-matroid, and we classify its family of flats.  The construction given here uses the flat axioms, whereas in \cite{Young} the hyperplane axioms are used.

\begin{proposition}\label{prop:classifysteinerflats}
	Let $\mathcal{S}$ be a $q$-Steiner system and let $\mathcal{B}$ denote its blocks. We define the family 
	$$\mathcal{F}=\left\{ \bigcap_ { B \in S}B  : S\subseteq \mathcal{B}\right\}.$$  
	Let $F$ be a subspace of $E$. 
	Then $F \in \mathcal{F}$ if and only if one of the following holds:  
	\begin{enumerate}
		\item $F=E$,
		\item $F \in \mathcal{B}$,
		\item $\dim(F)\leq t-1$.
	\end{enumerate}
\end{proposition}
\begin{proof}
	By definition, $\mathcal{F}$ is the collection of all intersections of the blocks in $\mathcal{B}$, so clearly $E \in \mathcal{F}$ (taking the empty intersection) and every block is contained in $\mathcal{F}$. Let us consider the case for which $F$ is the intersection of at least two blocks. Clearly, $\dim(F) \leq t-1$, because every $t$-space is in precisely one block by the Steiner subspace design property. 
	Let $\dim(F)=i$. By Lemma \ref{intersection_numbers}, there are exactly $\lambda_i = \qbin{n-i}{k-i}{q}\qbin{n-t}{k-t}{q}^{-1}$ blocks that contain $F$, so that $F$ is contained in the intersection $I$ of these blocks.
	We claim that $F=I$. If not, then there exists a 1-dimensional space $x \subseteq I$, $x \nsubseteq F$ such that the $(i+1)$-dimensional space $x+F$ is contained in $I$ and so, in particular, $x+F$ is contained in some $\lambda_i$ blocks. However, again by Lemma \ref{intersection_numbers} there are exactly $\lambda_{i+1} = \qbin{n-i-1}{k-i-1}{q}\qbin{n-t}{k-t}{q}^{-1}$ blocks that contain $x+F$, which leads to a contradiction since $\lambda_{i+1} < \lambda_i$. 
	We conclude that every space of dimension at most $t-1$ is contained in $\mathcal{F}$.
\end{proof}

\begin{theorem}\label{thm:steinertoflats}
	Let $\mathcal{S}$ be
	a $q$-Steiner system and let $\mathcal{B}$ denote its blocks. Let $\mathcal{F}$ be defined as in
	Proposition \ref{prop:classifysteinerflats}.
	Then $(E,\mathcal{F})$ defines a $q$-matroid given by its flats.
\end{theorem}
\begin{proof} By the cryptomorphic definition of a $q$-matroid in Theorem \ref{crypto:flat-rank}, it would be enough to show that $\mathcal{F}$ satisfies the axioms (F1), (F2) and (F3). We have that (F1) holds by Proposition \ref{prop:classifysteinerflats}. By the definition of $\mathcal{F}$, we see that (F2) also holds.
	To prove $(F3)$, let $F \in \mathcal{F}$ and let $x \subseteq E$ be a one-dimensional subspace such that $x \nsubseteq F$. 
	If $\dim(F)=k$ then the unique cover of $F$ in ${\mathcal F}$ that contains $x$ is the whole space $E\in \mathcal F$, since no block contains a $(k+1)$-dimensional space.
	Now suppose that $\dim(F) =t-1$. Then $\dim(x+F)=t$ so that there exists a unique block, which is contained in $\mathcal{F}$, that covers $F$ and contains $x$.
	Finally, suppose that $\dim(F) \leq t-2$. Then $\dim(F+x) \leq t-1$, so that by Proposition \ref{prop:classifysteinerflats} $x+F \in \mathcal{F}$, which is clearly the unique cover of $F$ that contains $x$.
\end{proof}

The $q$-matroid $(E,\mathcal{F})$ determined by a $q$-Steiner system as described in Theorem \ref{thm:steinertoflats} is referred to
as the $q$-matroid {\it induced} by the $q$-Steiner system.

In Theorem \ref{crypto:flat-rank} it was shown that a collection of subspaces $\mathcal{F}$ of $E$ satisfying (F1)-(F3) determines a $q$-matroid $(E,r)$ such that for each $A \subseteq E$, $r(A)+1$ is the length of a maximal chain of flats contained in $F_A$. We will now determine explicit values of the rank function of the $q$-matroid induced by a $q$-Steiner system as described in Theorem 
	\ref{thm:steinertoflats}.

\begin{proposition}\label{rank-func-pmd}
	Let $M=(E,\mathcal{F})$ be the $q$-matroid for which $\mathcal{F}$ is the set of
	intersections of the blocks of an $S(t,k,n; q)$ Steiner system 
$ (E,\mathcal{B})$. 
	Then $M$ is a $q$-PMD with rank function defined by
	$$r(A)=\left\{ \begin{array}{ll}
	\dim(A) & \text{ if } \dim(A) \leq t,\\
	t   & \text{ if } \dim(A) > t \textrm{ and } A \text{ is contained in a block of }\mathcal{B}, \\
	t+1  & \text{ if } \dim(A) > t \textrm{ and } A \text{ is not contained in a block of }\mathcal{B}.
	\end{array}
	\right .$$
\end{proposition}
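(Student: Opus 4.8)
The plan is to apply the cryptomorphism of Theorem~\ref{crypto:flat-rank}. Since $(E,\mathcal{F})$ is a $q$-matroid by Proposition~\ref{prop:steinertoflats}, its rank function is $r=r_{\mathcal F}$, where $r_{\mathcal F}(A)$ is the length — well defined by Corollary~\ref{JordanDedekind} — of a maximal chain of flats from $C_{\mathcal F}(\{0\})$ to $C_{\mathcal F}(A)$. We may assume $t<k<n$, the remaining cases being degenerate. First I would pin down the endpoints of these chains using Proposition~\ref{prop:classifysteinerflats}. Since $\dim\{0\}=0\le t-1$, the zero space is a flat, so $C_{\mathcal F}(\{0\})=\{0\}$. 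If $\dim A\le t-1$ then $A$ is itself a flat and $C_{\mathcal F}(A)=A$. If $A$ has dimension at least $t$ and is contained in some block, then it is contained in a \emph{unique} block, since any $t$-dimensional subspace of $A$ lies in exactly one block of the Steiner system; hence $C_{\mathcal F}(A)$ is that block. If $\dim A>t$ and $A$ is contained in no block, then by Proposition~\ref{prop:classifysteinerflats} the only flat containing $A$ is $E$, so $C_{\mathcal F}(A)=E$.

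Next I would compute the lengths of maximal chains of flats. A maximal chain from $\{0\}$ to a flat $F$ with $\dim F\le t-1$ passes through flats of dimensions $0,1,\dots,\dim F$ — every subspace of dimension at most $t-1$ is a flat — and so has length $\dim F$. A maximal chain from $\{0\}$ to a block $B$ passes through subspaces of $B$ of dimensions $0,1,\dots,t-1$ and then jumps directly to $B$; this last step is a covering relation because, by Proposition~\ref{prop:classifysteinerflats}, no flat has dimension strictly between $t-1$ and $k$ (such a flat would have to be a block, but every block has dimension $k$). Hence this chain has length $t$. Finally, a maximal chain from $\{0\}$ to $E$ is obtained by extending such a chain through a block $B$ and then to $E$, and $E$ covers $B$ for the same reason; this chain has length $t+1$.

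Combining these, $r(A)=r_{\mathcal F}(A)=\dim A$ when $\dim A\le t-1$; $r(A)=t$ when $\dim A=t$ (in which case $A$ lies in a block automatically) or when $\dim A>t$ and $A$ lies in a block; and $r(A)=t+1$ when $\dim A>t$ and $A$ lies in no block. Merging the first two cases yields the displayed formula. It remains to observe that $M$ is a $q$-PMD: by the preceding analysis, for $0\le i\le t-1$ the flats of rank $i$ are exactly the $i$-dimensional subspaces, the flats of rank $t$ are exactly the blocks (all of dimension $k$), and the unique flat of rank $t+1$ is $E$; thus any two flats of the same rank have the same dimension.

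I expect the only delicate points to be the chain-length computations, and in particular the verification that the steps ``a block covers a $(t-1)$-dimensional flat'' and ``$E$ covers a block'' are genuine covering relations in the flat lattice — this is precisely where the classification of flats in Proposition~\ref{prop:classifysteinerflats} is used — together with the bookkeeping observation that each $t$-dimensional subspace lies in exactly one block, which makes $C_{\mathcal F}(A)$ unambiguous in the intermediate range $\dim A\ge t$.
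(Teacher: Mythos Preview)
Your proof is correct and follows essentially the same approach as the paper's: identify $C_{\mathcal F}(A)$ in each case using Proposition~\ref{prop:classifysteinerflats}, compute the length of a maximal chain of flats to obtain $r_{\mathcal F}(A)$, and then read off the $q$-PMD property from the resulting classification of flats by rank. Your verification that the steps ``block over $(t-1)$-space'' and ``$E$ over block'' are genuine covers is slightly more explicit than the paper's, but the argument is the same in substance.
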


\begin{proof}
	Let $A \subseteq E$ be a subspace. Then $r(A)+1$ is the length of a maximal chain of flats contained in $F_A=C_{\mathcal{F}}(A)$.
	If $\dim A \leq t-1$ then $A$ is a flat, as are all its subspaces. So a maximal chain of flats contained in $F_A=A$ has length $\dim A+1$, hence $r(A)=\dim A$. If $\dim A=t$ then $A$ is contained in a unique block, and this block is equal to $F_A$. A maximal chain of flats
	has the form $F_0\subseteq F_1\subseteq\cdots\subseteq F_{t-1}\subseteq F_A$, where $\dim F_i=i$. This chain has length $t+1$ hence $r(A)=t=\dim A$.
	
	If $\dim A>t$ and $A$ is contained in a block, then $F_A$ is a block and we apply the same reasoning as before to find $r(A)=t$. If $\dim A>t$ and $A$ is not contained in a block, then $F_A=E$ and a maximal chain of flats is $F_0\subseteq F_1\subseteq\cdots\subseteq F_{t-1}\subseteq B\subseteq E$ where $B$ is a block.
	This gives $r(A)=t+1$.
	
	To establish the $q$-PMD property, we show that flats of the same rank have the same dimension.
	Clearly this property is satisfied by flats of dimension at most $t$.
	Let $F$ be a flat of dimension at least $t+1$ and rank $t$. Then $F$ is contained in a unique block and hence, being an intersection of blocks by definition, is itself a block and has dimension $k$. If $F$ has rank $t+1$, then it is not contained in a block, and hence must be $E$.
\end{proof}

\subsection{Subspace Designs from $q$-PMD's} Let $M$ be a $q$-matroid induced by a $q$-Steiner system. We will now give a classification of its flats, independent subspaces and circuits and show that these yield new subspace designs by the idea given in \cite{Young} for the classical case. {However, while the constructions are a direct generalisation to the $q$-analogue, the counting arguments for the parameters in these constructions are considerably more involved than in the classical case.}

\subsection*{Flats}

We have classified the flats of a $q$-matroid induced by a $q$-Steiner system in Proposition \ref{prop:classifysteinerflats}. By considering all flats of a given rank, we thus get the following designs:
\begin{enumerate}
	\item For rank $t+1$ we have only one block, $\mathbb{F}_q^n$. This is an $n$-$(n,n,1)$ design.
	\item For rank $t$, we get the original $q$-Steiner system.
	\item For rank less than $t$ we get a trivial design.
\end{enumerate}

\subsection*{Independent spaces}

\begin{proposition}\label{class_independent}
	Let $M$ be the $q$-PMD induced by a $q$-Steiner system with blocks $\mathcal{B}$. Let $I$ be a subspace of $E$. Then $I$ is independent if:
	\begin{enumerate}
		\item $\dim I\leq t$,
		\item $\dim I=t+1$ and $I$ is not contained in a block of $\mathcal{B}$.
	\end{enumerate}
\end{proposition}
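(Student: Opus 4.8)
The plan is to use the explicit description of the rank function from Proposition~\ref{rank-func-pmd} together with the definition of independence (Definition~\ref{independentspaces}), namely that $I$ is independent precisely when $r(I)=\dim I$. Since the rank function of the $q$-matroid induced by the Steiner system is known completely, this reduces to a direct case check, so the argument should be short.

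First I would treat case (1): if $\dim I\le t$, then by the first branch of the rank formula in Proposition~\ref{rank-func-pmd} we have $r(I)=\dim I$, so $I$ is independent by definition. Next, for case (2), suppose $\dim I=t+1$ and $I$ is not contained in any block of $\mathcal{B}$. Then we are in the third branch of the rank formula (since $\dim I=t+1>t$ and $I$ lies in no block), so $r(I)=t+1=\dim I$, and again $I$ is independent. It is worth also noting the converse-type remark for completeness: if $\dim I\ge t+1$ and $I$ \emph{is} contained in a block, then $r(I)=t<\dim I$, so $I$ is dependent; and if $\dim I>t+1$ then $r(I)\le t+1<\dim I$, so $I$ is dependent. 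Hence the two listed conditions in fact characterise exactly the independent spaces, though the statement as given only asserts the "if" direction.

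There is essentially no obstacle here: the real work was already done in establishing Proposition~\ref{rank-func-pmd}, and this proposition is just a translation of that rank formula through the definition of independence. The only mild subtlety is being careful about the boundary dimension $t+1$, where independence depends on whether $I$ sits inside a block; invoking the precise statement of Proposition~\ref{rank-func-pmd} handles this cleanly. I would therefore write the proof as two short paragraphs, one per case, each a one-line application of the rank formula, and optionally append the remark identifying the dependent spaces so that the classification is complete.
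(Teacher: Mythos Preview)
Your proposal is correct and follows exactly the same approach as the paper: the paper's proof is a one-line appeal to the definition of independence ($r(I)=\dim I$) together with the explicit rank function of Proposition~\ref{rank-func-pmd}. Your version is simply a slightly more detailed unpacking of that same argument, including the optional converse remarks.
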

\begin{proof}
	This follows directly from the fact that $I$ is independent if and only if $r(I)=\dim I$ and the definition of the rank function of $M$ in Proposition \ref{rank-func-pmd}.
\end{proof}

We want to know if all independent spaces of a fixed dimension $\ell$ of a given $q$-PMD form the blocks of a $q$-design. There are two trivial cases:
\begin{enumerate}
	\item If $\ell\leq t$ then the blocks are all spaces of dimension $\ell$. This is a trivial design.
	\item If $\ell>t+1$ then there are no independent spaces. This is the empty design.
\end{enumerate}
So, the only interesting case to study is that of the independent spaces of dimension $t+1$. These comprise the $(t+1)$-spaces none of which is contained in a block of $\mathcal{B}$.

\begin{theorem}\label{design_independent}
	Let $M$ be the $q$-PMD induced by a $q$-Steiner system with parameters $S(t,k,n;q)$ and blocks $\mathcal{B}$.
	The independent spaces of dimension $t+1$ of $M$ form a $t$-$(n,t+1,\lambda_{\mathcal{I}})$ design with $\lambda_{\mathcal{I}}=(q^{n-t}-q^{k-t})/(q-1)$.
\end{theorem}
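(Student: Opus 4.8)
The plan is to fix an arbitrary $t$-dimensional subspace $T\subseteq E$ and count directly how many $(t+1)$-dimensional independent subspaces of $M$ contain $T$; if this count turns out to equal $\lambda_{\mathcal{I}}$ irrespective of $T$, we are done. The first ingredient is the precise description of the blocks of the putative design: combining Proposition~\ref{class_independent} with the explicit rank function of Proposition~\ref{rank-func-pmd}, a $(t+1)$-dimensional subspace $I$ is independent in $M$ if and only if $I$ is not contained in any block of $\mathcal{B}$ (a $(t+1)$-space contained in a block has rank $t<t+1$, while every other $(t+1)$-space has rank $t+1$).

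So fix $T$ with $\dim T=t$. By Lemma~\ref{lem:folklore}, the number of $(t+1)$-spaces of $E$ containing $T$ is $\qbin{n-t}{1}{q}=(q^{n-t}-1)/(q-1)$, and from these I subtract those that lie inside a block. Because $(E,\mathcal{B})$ is a $q$-Steiner system $S(t,k,n;q)$ and $\dim T=t$, there is a unique block $B\in\mathcal{B}$ with $T\subseteq B$; moreover any $(t+1)$-space $I\supseteq T$ contained in some block $B'$ satisfies $T\subseteq B'$, which forces $B'=B$. Hence the $(t+1)$-spaces containing $T$ that lie in a block are exactly those $I$ with $T\subseteq I\subseteq B$, of which there are $\qbin{k-t}{1}{q}=(q^{k-t}-1)/(q-1)$ by Lemma~\ref{lem:folklore} applied inside $B$. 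Subtracting gives
$$\qbin{n-t}{1}{q}-\qbin{k-t}{1}{q}=\frac{(q^{n-t}-1)-(q^{k-t}-1)}{q-1}=\frac{q^{n-t}-q^{k-t}}{q-1}=\lambda_{\mathcal{I}},$$
a quantity depending only on $n,k,t,q$. Thus every $t$-space of $E$ is contained in exactly $\lambda_{\mathcal{I}}$ of the independent $(t+1)$-spaces of $M$, which is precisely the statement that these spaces form the blocks of a $t$-$(n,t+1,\lambda_{\mathcal{I}})$ design.

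There is no real obstacle in this argument; the only points meriting care are the "if and only if" in the characterisation of the independent $(t+1)$-spaces — so that the objects being counted are exactly the blocks of the claimed design — and the appeal to $\lambda=1$, which is what guarantees that the $t$-space $T$ lies in a single block and hence makes the subtraction clean and independent of $T$.
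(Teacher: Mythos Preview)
Your proof is correct and follows essentially the same approach as the paper: both fix a $t$-space, observe it lies in a unique block $B$, and use that an independent $(t+1)$-superspace is exactly one not contained in $B$. The only cosmetic difference is that the paper counts one-dimensional extensions $x\not\subseteq B$ and then divides by the overcount $q^t$, whereas you count $(t+1)$-superspaces directly via Lemma~\ref{lem:folklore} and subtract $\qbin{k-t}{1}{q}$ --- a slightly cleaner bookkeeping that reaches the same $\lambda_{\mathcal I}$.
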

\begin{proof}
	Let $\mathcal{I}$ be the set of independent spaces of dimension $t+1$ of $M$.
	We claim that for a given $t$-space $A$, the number of blocks $I\in\mathcal{I}$ containing it is independent of the choice of $A$, and thus equal to $\lambda_{\mathcal{I}}$.
	
	Let $A$ be a $t$-space and let $\lambda(A)$ denote the number of $(t+1)$-spaces of ${\mathcal I}$ that contain $A$. 
	$A$ is contained in a unique block $B\in\mathcal{B}$ of the $q$-Steiner system. We extend $A$ to a $(t+1)$-space $I$ that is not contained in any block of $\mathcal{B}$, that is, we extend $A$ to $I \in \mathcal{I}$. We do this by taking a $1$-dimensional vector space $x$ not in $B$ and letting $I=A+x$. The number of $1$-spaces not in $B$ is equal to the total number of $1$-spaces minus the number of  one-dimensional spaces in $B$:
	\[\qbin{n}{1}{q}-\qbin{k}{1}{q}=\frac{q^n-1}{q-1}-\frac{q^k-1}{q-1}=q^k
	\qbin{n-k}{1}{q}.\]
	However, another one-dimensional space $y$ that is in $I$ but not in $A$ gives that $A+x=A+y$. The number of one-dimensional spaces in $I$ but not in $A$ is equal to
	\[ \qbin{t+1}{1}{q}-\qbin{t}{1}{q}=\frac{q^{t+1}-1}{q-1}-\frac{q^t-1}{q-1}=q^t. \]
	This means that the number of ways we can extend $A$ to $I \in \mathcal{I}$ is the quotient of the two values calculated above:
	\[\lambda(A)=
	q^{k-t}\qbin{n-k}{1}{q}=
	\frac{q^{n-t}-q^{k-t}}{q-1}=\lambda_{\mathcal{I}}, \]
	which is independent of the choice of $A$ of dimension $t$. 
\end{proof}

\begin{example}
	For the $q$-PMD arising from the 
	$S(2,3,13;2)$ Steiner system we have
	$b_{\mathcal{I}}= 3267963270$
	and 
	$\lambda_{\mathcal{I}} = 2046.$
	For the $q$-PMD arising from the  putative $q$-Fano plane $STS(7;2)$, we have
	$b_{\mathcal{I}} = 11430$ and $\lambda_{\mathcal{I}}=30.$
\end{example}

\begin{remark}
{For $k=t+1$ the construction from Theorem \ref{design_independent} gives the supplementary design of the $q$-Steiner system.}
\end{remark}

\subsection*{Circuits}

\begin{proposition}\label{prop:circuits}
	Let $M$ be a $q$-PMD induced by a $q$-Steiner system $S(t,k,n;q)$ with blocks $\mathcal{B}$. Let $C$ be a subspace of $M$. Then $C$ is a circuit if and only if:
	\begin{enumerate}
		\item $\dim C=t+1$ and $C$ is contained in a block of $\mathcal{B}$,
		\item $\dim C=t+2$ and all $(t+1)$-subspaces of $C$ are contained in none of the blocks of $\mathcal{B}$.
	\end{enumerate}
\end{proposition}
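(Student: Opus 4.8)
The plan is to deduce everything from the explicit rank function of Proposition~\ref{rank-func-pmd} (equivalently, the classification of independent spaces in Proposition~\ref{class_independent}), after first observing that in a $q$-matroid independence is hereditary, which reduces the circuit condition to a statement about the codimension-one subspaces of $C$.

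First I would record the hereditary property: if $D\subseteq D'\subseteq E$ with $D'$ independent, then $D$ is independent. Indeed, by Theorem~\ref{th:JP18prel} Part~\ref{th:JP18prel:part1} the rank grows by at most one each time the dimension grows by one, so passing from $D$ to $D'$ in $\dim D'-\dim D$ steps gives $r(D)\geq \dim D'-(\dim D'-\dim D)=\dim D$, while (R1) gives the reverse inequality. Consequently every proper subspace of $C$ is independent if and only if every subspace of $C$ of dimension $\dim C-1$ is independent; hence $C$ is a circuit exactly when $C$ is dependent and all of its subspaces of codimension one are independent.

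Next I would split on $\dim C$, using Proposition~\ref{rank-func-pmd}. If $\dim C\leq t$ then $C$ is independent, so not a circuit. If $\dim C\geq t+3$, then every codimension-one subspace of $C$ has dimension $\geq t+2$, hence rank $\leq t+1<\dim$, so it is dependent and $C$ is not a circuit; this shows the list in the statement is exhaustive. There remain two cases. If $\dim C=t+1$: every subspace of $C$ of dimension $t$ is automatically independent, so $C$ is a circuit if and only if $C$ itself is dependent, i.e.\ (by Proposition~\ref{rank-func-pmd}) $r(C)=t$, i.e.\ $C$ is contained in a block of $\mathcal{B}$ --- this is case (1). If $\dim C=t+2$: here $C$ is automatically dependent since $r(C)\leq t+1$, and its codimension-one subspaces are exactly its $(t+1)$-dimensional subspaces, which by Proposition~\ref{class_independent} are independent precisely when they lie in no block of $\mathcal{B}$; thus $C$ is a circuit if and only if none of its $(t+1)$-subspaces lies in a block --- this is case (2). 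Note that (2) in particular forces $C$ itself to lie in no block, so the two listed cases are disjoint.

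The argument is routine once the rank function is in hand; the only genuinely structural ingredient is the hereditary property of independence, which lets us test just the codimension-one subspaces, together with the small observation that a circuit cannot have dimension exceeding $t+2$, so that the two listed families are indeed all the circuits.
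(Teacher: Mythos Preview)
Your proof is correct and follows essentially the same approach as the paper's own argument. The paper's proof is more terse: it simply asserts that a circuit is characterised by all its codimension-one subspaces being independent, notes that the rank of $M$ is $t+1$ so a circuit has dimension at most $t+2$, and then defers to Proposition~\ref{class_independent}; your version makes each of these steps explicit (proving the hereditary property, and handling $\dim C\geq t+3$ directly rather than invoking the general bound $\dim(\text{circuit})\leq r(M)+1$), but the logical structure is the same.
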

\begin{proof}
	A circuit is a space such that all its codimension $1$ subspaces are independent. All spaces of dimension at most $t$ are independent, so a circuit will have dimension at least $t+1$. Also, since the rank of $M$ is $t+1$, a circuit has dimension at most $t+2$. The result now follows from the definition of a circuit and the above Proposition \ref{class_independent} that classifies the independent spaces of $M$.
\end{proof}

We now show that all the $(t+1)$-circuits form a design and that all the $(t+2)$-circuits form a design.

\begin{theorem}\label{design_circuits}
	Let $M$ be a $q$-PMD induced by a $q$-Steiner system $S(t,k,n; q)$ with blocks $\mathcal{B}$. Let $\mathcal{C}_{t+1}$ be the collection of all circuits of $M$ of dimension $(t+1)$. Then $\mathcal{C}_{t+1}$ are the blocks of a $t$-$(n,t+1,\lambda_{\mathcal{C}_{t+1}})$ design where \[ \lambda_{\mathcal{C}_{t+1}}=\qbin{k-t}{1}{q}.\]
\end{theorem}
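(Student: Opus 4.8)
The plan is to show that every $t$-dimensional subspace $A$ of $E$ is contained in the same number of $(t+1)$-circuits of $\mathcal{C}_{t+1}$, and then identify that constant with $\qbin{k-t}{1}{q}$. By Proposition \ref{prop:circuits}, a $(t+1)$-dimensional circuit is precisely a $(t+1)$-space that \emph{is} contained in a block of $\mathcal{B}$. So I need to count, for a fixed $t$-space $A$, the number of $(t+1)$-spaces $C$ with $A \subseteq C$ and $C$ contained in some block $B \in \mathcal{B}$.

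First I would use the defining property of the $q$-Steiner system: the $t$-space $A$ is contained in exactly one block $B$ of $\mathcal{B}$. I claim that a $(t+1)$-space $C$ with $A\subseteq C$ is contained in \emph{some} block if and only if it is contained in $B$. Indeed, if $A \subseteq C \subseteq B'$ for a block $B'$, then $B'$ contains the $t$-space $A$, so by uniqueness $B' = B$; the converse is trivial. Hence the count reduces to the number of $(t+1)$-spaces $C$ with $A \subseteq C \subseteq B$, which is the number of $1$-dimensional subspaces of the quotient space $B/A$. Since $\dim B = k$ and $\dim A = t$, this quotient has dimension $k - t$, so by Lemma \ref{lem:folklore} (or directly) the count is $\qbin{k-t}{1}{q}$, independent of $A$. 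This simultaneously shows $\mathcal{C}_{t+1}$ is a $t$-$(n,t+1,\lambda)$ design with $\lambda = \qbin{k-t}{1}{q}$.

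I do not anticipate a serious obstacle here — the argument is essentially the same "unique block through a $t$-space" bookkeeping used in the proof of Theorem \ref{design_independent}, only easier, since we are now counting $(t+1)$-spaces \emph{inside} the unique block rather than outside every block. The one point requiring a line of care is the claim that no block other than $B$ can contain such a $C$, which rests squarely on $\lambda = 1$ for the Steiner system; this should be stated explicitly. One should also note in passing that $\qbin{k-t}{1}{q} = (q^{k-t}-1)/(q-1)$ to match the phrasing style of the surrounding examples, and observe that $\lambda_{\mathcal{C}_{t+1}} \geq 1$ precisely because $k > t$, so the design is nontrivial.
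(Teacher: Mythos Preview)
Your proof is correct and follows essentially the same route as the paper's: identify the unique block $B_A$ containing the $t$-space $A$, argue via the $\lambda=1$ property that any $(t+1)$-circuit through $A$ must lie in $B_A$, and then count the $(t+1)$-spaces between $A$ and $B_A$ using Lemma~\ref{lem:folklore}. The only cosmetic difference is that you phrase the final count via the quotient $B/A$, whereas the paper invokes Lemma~\ref{lem:folklore} directly.
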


\begin{proof}
	Let $A$ be a $t$-space contained in a unique block $B_A$ in the $q$-Steiner system. There are $\qbin{k-t}{t+1-t}{q} = \qbin{k-t}{1}{q}$ $(t+1)$-dimensional subspaces of $B_A$ that contain $A$, from Lemma \ref{lem:folklore}. Every such $(t+1)$-space is a circuit by definition. If $C$ is a circuit not contained in $B_A$, then by Proposition \ref{prop:circuits} $C$ is contained in another block $B \in {\mathcal B}$. Therefore, if $A \subseteq C$, then $A$ is contained in two distinct blocks $B_A$ and $B$, contradicting the Steiner system property. 
	Hence the number of blocks that contain $A$ is$\lambda(A)=\qbin{k-t}{1}{q},$ which is independent of our choice of $A$ of dimension $t$.
\end{proof}

\begin{remark}
	In fact by Proposition \ref{prop:circuits}, Theorem \ref{design_circuits} and Theorem \ref{design_independent} are equivalent. The circuits of dimension $t+1$ are precisely the set $(t+1)$-spaces each of which is contained in some block of the $q$-Steiner system. Therefore this set of circuits is the complement of the set of $(t+1)$- spaces for which none of its members is contained in a block of the Steiner system. It follows that the $q$-designs of Theorems \ref{design_independent} and \ref{design_circuits} are supplementary designs with respect to each other.
\end{remark}

\begin{theorem}\label{design_circuits-2}
	Let $M$ be a $q$-PMD induced by a $q$-Steiner system $S(t,k,n; q)$ with blocks $\mathcal{B}$. Let $\mathcal{C}_{t+2}$ be the collection of all circuits of $M$ of dimension $(t+2)$. Then $\mathcal{C}_{t+2}$ are the blocks of a $t$-$(n,t+2,\lambda_{\mathcal{C}_{t+2}})$ design where
	\[ \lambda_{\mathcal{C}_{t+2}}=q^{k-t}\qbin{n-k}{1}{q}\left( \qbin{n-t-1}{1}{q}-\qbin{k-t}{1}{q}\qbin{t+1}{1}{q}\right)\frac{1}{q+1}. \]
\end{theorem}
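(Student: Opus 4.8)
The plan is to fix a $t$-dimensional subspace $A\subseteq E$ and show that the number $\lambda(A)$ of $(t+2)$-dimensional circuits of $M$ containing $A$ equals the stated constant $\lambda_{\mathcal{C}_{t+2}}$. A direct count is awkward, since by Proposition \ref{prop:circuits} the circuit condition involves \emph{all} $(t+1)$-subspaces of $C$, not just those through $A$; instead I would double count the incidence set
$$\mathcal{P}:=\{(I,C)\ :\ A\subseteq I\subseteq C,\ \dim I=t+1,\ I\text{ independent},\ C\in\mathcal{C}_{t+2}\}.$$
If $C\in\mathcal{C}_{t+2}$ contains $A$, then every $(t+1)$-subspace of $C$ is independent (Proposition \ref{prop:circuits}), and by Lemma \ref{lem:folklore} there are $\qbin{2}{1}{q}=q+1$ such subspaces through $A$; hence each such $C$ contributes exactly $q+1$ pairs, so $|\mathcal{P}|=(q+1)\lambda(A)$.

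For the other direction I would use Theorem \ref{design_independent}: there are exactly $\lambda_{\mathcal I}=q^{k-t}\qbin{n-k}{1}{q}$ independent $(t+1)$-spaces $I$ with $A\subseteq I$, and any circuit $C\supseteq I$ automatically contains $A$. Thus it suffices to prove the key claim: \emph{for every independent $(t+1)$-space $I$, the number of $(t+2)$-circuits containing $I$ is the constant} $\mu:=\qbin{n-t-1}{1}{q}-\qbin{t+1}{1}{q}\qbin{k-t}{1}{q}$. Granting this, $|\mathcal{P}|=\lambda_{\mathcal I}\,\mu$, and equating the two expressions yields $\lambda(A)=\lambda_{\mathcal I}\,\mu/(q+1)=\lambda_{\mathcal{C}_{t+2}}$, independent of $A$, which is exactly the asserted design property.

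To prove the claim, I would fix an independent $I$ and observe that, by Lemma \ref{lem:folklore}, there are $\qbin{n-t-1}{1}{q}$ spaces $C$ of dimension $t+2$ with $I\subseteq C$; a given such $C$ is a circuit iff no block meets $C$ in dimension $\geq t+1$ (if $\dim(B\cap C)=t+2$ then $I\subseteq B$, contradicting independence by Proposition \ref{class_independent}; and $\dim(B\cap C)=t+1$ exhibits a dependent $(t+1)$-subspace of $C$). Call a $(t+1)$-subspace $D\neq I$ of such a $C$ a \emph{bad hyperplane} if $D$ lies in a block. For any bad hyperplane $D$, $P:=D\cap I$ has dimension $t$ (since $D\neq I$ forces $D+I=C$), so the block containing $D$ is the unique block $B_P$ through $P$; conversely, for each of the $\qbin{t+1}{1}{q}$ hyperplanes $P$ of $I$ and each of the $\qbin{k-t}{1}{q}$ spaces $D$ with $P\subseteq D\subseteq B_P$ (Lemma \ref{lem:folklore} inside $B_P$), $D$ is a bad hyperplane ($D\neq I$ because $I\not\subseteq B_P$ by independence) lying in the unique $(t+2)$-space $C=I+D$ above $I$. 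This sets up a bijection between bad hyperplanes and such pairs $(P,D)$, so there are $\qbin{t+1}{1}{q}\qbin{k-t}{1}{q}$ of them in total.

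The crux — and the one genuinely delicate point — is then to show \emph{each non-circuit $C\supseteq I$ carries exactly one bad hyperplane}: if $D_1\neq D_2$ were bad hyperplanes of $C$, lying in blocks $B_1,B_2$, then $B_1\neq B_2$ (otherwise $C=D_1+D_2\subseteq B_1$, again contradicting independence of $I$), while $D_1\cap D_2$ is a $t$-space (two distinct hyperplanes of $C$) contained in both $B_1$ and $B_2$, contradicting the Steiner property that each $t$-space lies in a unique block. Hence the number of non-circuit $C\supseteq I$ equals the number of bad hyperplanes, namely $\qbin{t+1}{1}{q}\qbin{k-t}{1}{q}$, so the number of circuits $C\supseteq I$ is $\qbin{n-t-1}{1}{q}-\qbin{t+1}{1}{q}\qbin{k-t}{1}{q}=\mu$, proving the claim. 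Everything apart from this last uniqueness argument is routine bookkeeping with Lemma \ref{lem:folklore}; the Steiner hypothesis enters precisely here (equivalently, through the fact that two distinct blocks intersect in dimension at most $t-1$).
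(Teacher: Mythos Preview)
Your argument is correct and follows essentially the same double-counting of pairs $(I,C)$ as the paper's proof: both use Theorem~\ref{design_independent} for the number of independent $I\supseteq A$, subtract non-circuits above a fixed $I$ via the Steiner uniqueness property, and divide by the $q+1$ intermediate $(t+1)$-spaces between $A$ and $C$. The only cosmetic difference is that the paper organizes the non-circuit count by blocks $B\in\mathcal{B}_I$ (setting up explicit bijections $\mathcal{S}(B,I)\leftrightarrow\mathcal{T}(B,I)$), whereas you parametrize directly by the ``bad hyperplanes'' $D$ via the pair $(P,D)$ with $P=D\cap I$; these are the same enumeration.
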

\begin{proof}
	Let $\mathcal{C}_{t+2}$ be the set of circuits of dimension $t+2$ of $M$. We argue that every $t$-space is contained in the same number $\lambda_{\mathcal{C}_{t+2}}$ of members of $\mathcal{C}_{t+2}$. We do this by calculating for a given $t$-space $A$ the number of blocks $C\in\mathcal{C}_{t+2}$ it is contained in. It turns out this number is independent of the choice of $A$, and thus equal to $\lambda_{\mathcal{C}_{t+2}}$. 
	
	Define
	$$N(A):=|\{ (I,C): A\subseteq I\subseteq C, I \in {\mathcal I}, \dim I = t+1, C\in  {\mathcal{C}_{t+2}} \}|. $$
	The number of $(t+1)$-dimensional independent spaces $I$ containing $A$ is exactly the number $\lambda_{\mathcal{I}}$ calculated in Theorem \ref{design_independent}, which is $(q^{n-t}-q^{k-t})/(q-1)$. 
	Now let $I$ be an independent space of dimension $t+1$ that contains the $t$-space $A$. Then $I$ is a $(t+1)$-space that is not contained in a block of $\mathcal{B}$. 
	We will count the number of $(t+2)$-dimensional spaces $C$ such that
	$I \subseteq C \in {\mathcal{C}_{t+2}}$. 
	Such a subspace $C$ contains $I$ as a subspace of codimension $1$ and meets any block of $\mathcal{B}$ in a space of dimension at most $t$, by Proposition \ref{prop:circuits}. 
	Define $\mathcal{B}_I:=\{ B \in \mathcal{B} : \dim(B \cap I) = t \}$. 
	Clearly,
	the complement of $\mathcal{C}_{t+2}$ in the set of all $(t+2)$-dimensional spaces containing $I$ is the set of $(t+2)$-dimensional subspaces of
$E$ that contain $I$ and meet some block of $\mathcal{B}_I$ in a $(t+1)$-dimensional space.
	 
	Now fix some $B \in \mathcal{B}_I$. 
	Let 
	$$\mathcal{S}(B,I):=\{ D \subseteq E : \dim(D)=t+2, I \subseteq D, \dim(B \cap D)=t+1 \}$$ and let $$\mathcal{T}(B,I) :=\{ X \subseteq B: \dim(X) = t+1,I \cap B \subseteq X\}.$$
	We now claim that the following are well defined mutually inverse bijections:
	\[ 
	\varphi: \mathcal{S}(B,I) \longrightarrow \mathcal{T}(B,I) : D \mapsto D \cap B,\:
	\phi: \mathcal{T}(B,I)  \longrightarrow \mathcal{S}(B,I) : X \mapsto X+I .
	\]
	Let $D \in \mathcal{S}(B,I)$ and let $X = D \cap B$. Then $I \cap B \subseteq X$ as $I \subseteq D$ and clearly $\dim(X) = t+1$. Therefore $X \in \mathcal{T}(B,I)$ and $\varphi$ is well-defined.
	Conversely, let $X \in \mathcal{T}(B,I)$ and define $D= X+I$. 
	Note first that as $\dim(I \cap B)=t$, $I \cap X = I \cap B$ has codimension 1 in $X$. 
	We have
	$\dim(D) = \dim(X+I)=\dim(X) +\dim(I)-\dim(X \cap I) = t+1 +t+1 - t = t+2$. Clearly, $I \subseteq D$ and
	$\dim(D \cap B) = \dim(D)+\dim(B) - \dim(D + B)=t+2 + k -\dim(I + B) =t+2+k-k-1=t+1$.
	Therefore, $\phi$ is well-defined.
	Let $X \in \mathcal{T}(B,I)$ and let $D \in \mathcal{S}(B,I)$. Then, as $I \cap B \subseteq X \subseteq B$,
	\begin{align*}
	    \varphi \circ \phi(X) & = \varphi(X+I) = (X+I) \cap B = (X \cap B) + (I \cap B) = X ,\\
	    \phi \circ \varphi(D) & = \phi(D \cap B)  = (D \cap B) + I = D,
	\end{align*}
 where the last equality follows from the fact that $I$ has codimension 1 in $D$ and $I \nsubseteq B$.
	It follows that there is a 1-1 correspondence between the members of $\mathcal{S}(B,I)$ and $\mathcal{T}(B,I)$.
	Therefore, $|\mathcal{S}(B,I)| =|\mathcal{T}(B,I)| = \qbin{k-t}{1}{q}$, since this counts the number of $(t+1)$-dimensional subspaces of $B$ that contain $I \cap B$. 
    We claim now that the $\mathcal{S}(B,I)$ are disjoint. Let $B_1, B_2 \in \mathcal{B}_I$ and let 
    $D \in \mathcal{S}(B_1,I) \cap \mathcal{S}(B_2,I)$. Then $B_1$ and $B_2$ both each meet $D$ in spaces of dimension $t+1$, say $A_1 = B_1 \cap D$ and $A_2 = B_2 \cap D$.
  Then $A_1 \cap A_2  = B_1 \cap B_2 \cap D $ has dimension $t$, being an intersection of two spaces of codimension 1 in $D$. This contradicts the fact that every $t$-dimensional subspace of $E$ is contained in a unique block.
    Therefore, 
    \[\left| \bigcup_{B \in \mathcal{B}_I} \mathcal{S}(I,B) \right| 
    = \sum_{B \in \mathcal{B}_I} \left|  \mathcal{S}(I,B) \right| 
    = |\mathcal{B}_I| \qbin{k-t}{1}{q} =   \qbin{t+1}{1}{q} \qbin{k-t}{1}{q}.\]
    The number of $(t+2)$-dimensional subspaces $C$ that contain $I$ and do not meet any block $B \in \mathcal{B}_I$ in a space of dimension $t+1$ is thus
    \[ \qbin{n-t-1}{1}{q}  - \qbin{t+1}{1}{q} \qbin{k-t}{1}{q}.\]
    By Theorem \ref{design_independent}, there are exactly $q^{k-t}\qbin{n-k}{1}{q}$ different $(t+1)$-dimensional 
    independent spaces that contain $A$.
	It follows that 
	$$N(A)=q^{k-t}\qbin{n-k}{1}{q}\left(\qbin{n-t-1}{1}{q}- \qbin{t+1}{1}{q} \qbin{k-t}{1}{q}\right),$$
	which is independent of our choice of $A$ of dimension $t$.
	Now for a fixed circuit $C \in {\mathcal{C}_{t+2}}$ containing $A$ there are 
	\[ \qbin{(t+2)-t}{(t+1)-t}{q}=\qbin{2}{1}{q}=\frac{q^2-1}{q-1}=q+1\]    
	independent $(t+1)$-spaces containing $A$ and contained in $C$. So we have
	$$N(A)= (q+1) |\{C \in {\mathcal{C}_{t+2}} : A \subseteq C\}|=(q+1) \lambda(A).$$
	We conclude that
	\[ \lambda_{\mathcal{C}_{t+2}}=\lambda(A)=q^{k-t}\qbin{n-k}{1}{q}\left( \qbin{n-t-1}{1}{q}-\qbin{t+1}{1}{q}\qbin{k-t}{1}{q}\right)\frac{1}{q+1}. \]
\end{proof}

\begin{remark}
{
For the putative $q$-analogue of the Fano plane, the dual of the construction in Theorem \ref{design_circuits-2} was described in \cite[Thm.~4.1]{KP}. This Theorem states that the existence of a $2$-$(7,3,1;q)$ design (i.e., a $S(2,3,7;q)$ Steiner system) implies the existence of a $2$-$(7,3,q^4;q)$ design and is proved by showing that the dual spaces of some subspaces described by the authors as `of type $4_0$' form a $2$-$(7,3,q^4;q)$ design. Spaces of type $4_0$ are $4$-spaces that do not contain any block of the original design. Indeed, taking $k=3$ and $t=2$ in Proposition \ref{prop:circuits} shows that the spaces of type $4_0$ are exactly the $4$-circuits of the $q$-PMD induced by the $S(2,3,7;q)$ Steiner system. They form a $2$-$(7,4,q^6+q^4;q)$ design by Theorem \ref{design_circuits-2} and the corresponding dual design would have parameters $2$-$(7,3,q^4;q)$ by Definition \ref{lem:dualdesign}.}
\end{remark}

The admissibility of design parameters (see Lemma \ref{intersection_numbers}) plays an important role on the question of existence of subspace designs. In the following corollary we give admissibility conditions on the parameters of the design presented in Theorem \ref{design_circuits-2} arising from an $STS(n,q)$. In order to arrive at {\em normalised} parameters in all cases (i.e. those for which $2k\leq n$ and $2\lambda \leq \lambda_{\max}$, where $\lambda_{\max}$ is the maximum possible value corresponding to an admissable parameter set $t$-$(n,k,\lambda_{\max})$), we also calculate the parameters of the supplementary and dual subspace designs.

\begin{corollary}\label{cor:STS}
	If a $q$-Steiner triple system $STS(n;q)$ exists, then there exist $2$-$(n,k,\lambda;q)$ designs with the following parameters.
	\begin{enumerate}
		\item[(1)] $k=4$, $\displaystyle \lambda=q^4\frac{(q^{n-3}-1)(q^{n-6}-1)}{(q^2-1)(q-1)}$,
		\label{cor:STSpart1}
		\item[(2)] $k=4$, $\displaystyle \lambda= \frac{(q^{n-3}-1)(q^4-1)}{(q^2-1)(q-1)}$,\label{cor:STSpart2}
		\item[(3)] $k=n-4$, $\displaystyle\lambda= \qbin{n-3}{3}{q}$,\label{cor:STSpart3}
		\item[(4)] $k=n-4$, $\displaystyle\lambda= q^4\qbin{n-3}{4}{q}$.\label{cor:STSpart4}
	\end{enumerate}
	{In this case, the design with the parameters of (2) is the supplementary design of the design with parameters (1), the design of (3) is the dual design of the design with parameters (2), } and the design of (4) is the dual of the design with parameters (1).
	Moreover, the parameters of the designs listed above 
	are admissible if and only if $n\equiv 0,1,3,4 \mod 6$.
\end{corollary}
\begin{proof}
	As a special case of Theorem \ref{design_circuits-2}, if an STS$(n;q)$ exists then a $2$-$(n,4,\lambda;q)$ design $\mathcal{ D}$ exists for 
	\begin{eqnarray*}
		\lambda & = & q\qbin{n-3}{1}{q}\left( \qbin{n-3}{1}{q}-\qbin{3}{1}{q}\right)\frac{1}{q+1}\\
		& = & q^4\frac{(q^{n-3}-1)(q^{n-6}-1)}{(q^2-1)(q-1)}.
	\end{eqnarray*}	
	It is straightforward to verify that the supplementary design ${\mathcal D}'$ of $\mathcal{D}$ has parameters as given in (2), 
	that the dual design of $\mathcal{D}'$ has parameters as given in (3), and that the dual design of $\mathcal{D}$ has parameters as given in (4).
	Clearly, the parameters of (1)-(3) are admissable or not admissable simultaneously. Therefore we need only check for admissability of one parameter set. To this end we only consider the parameters in (3). Note that these parameters are admissable if and only if
$\lambda_2=\lambda$, $\lambda_1$ and $\lambda_0$ (the number of blocks of the design) are all
non negative integers.
	From Lemma the \ref{intersection_numbers}, the intersection number $\lambda_1$ corresponding to the parameters $2$-$(n,n-4,\lambda=\qbin{n-3}{3}{q};q)$ of (3) is 
	\begin{eqnarray*}
		\lambda_{1} & = & 
		\qbin{n-3}{3}{q}\qbin{n-1}{4}{q}\qbin{n-2}{4}{q}^{-1} =
		\frac{q^{n-1}-1}{q^3-1}\frac{q^{n-3}-1}{q^2-1}\frac{q^{n-4}-1}{q-1}.
	\end{eqnarray*}
	If $n\equiv 0,1,3,4 \mod 6$ such that $n>6$, we see that $\lambda,\lambda_{1}$ are positive integers and hence the parameters of (3)
	are admissable.
	Conversely, assume that the parameters of (3) are admissable. Then in particular $\lambda_1$ is an integer, which holds if and only if either
	  \begin{align*}
	  \begin{cases} 
      \displaystyle\sum_{i=0}^{2r-2}q^i\sum_{j=0}^{2r-4}q^j\sum_{t=0}^{r-3}q^{2t}\equiv 0 \mod q^2+q+1  \text{ and }n=2r \text{ for some } r\in \mathbb{Z}, \\
      \displaystyle \sum_{i=0}^{2r-1}q^i\sum_{j=0}^{r-1}q^{2j}\sum_{t=0}^{2r-4}q^t\equiv 0 \mod q^2+q+1  \text{ and }n=2r+1 \text{ for some } r\in \mathbb{Z}.
      \end{cases}
	  \end{align*}
	     For $n=2r$, if $\displaystyle\sum_{i=0}^{2r-2}q^i$ is divisible by $q^2+q+1$, then $2r-1\equiv 0 \mod 3$, which implies $n\equiv 1 \mod 3$. Then since $n\equiv 1 \mod 3$ and $n\equiv 0 \mod 2$ we obtain $n\equiv 4 \mod 6$. If $\displaystyle\sum_{j=0}^{2r-4}q^j$ is divisible by $q^2+q+1$, then $2r-3\equiv 0 \mod 3$, which implies $n=2r\equiv 0 \mod 3$. Both $n\equiv 0 \mod 3$ and $n\equiv 0 \mod 2$ implies $n\equiv 0 \mod 6$. For $n=2r+1$, if $\displaystyle\sum_{i=0}^{2r-1}q^i$ is divisible by $q^2+q+1$ then $2r-2\equiv 0 \mod 3$, which implies $n=2r+1\equiv 0 \mod 3$. Both $n\equiv 1 \mod 2$ and $n\equiv 0 \mod 3$ implies $n\equiv 3 \mod 6$. And finally, if $\displaystyle\sum_{t=0}^{2r-4}q^t$ is divisible by $q^2+2+1$ then $2r-3\equiv 0 \mod 3$, which implies $n=2r+1\equiv 1 \mod 3$. Both $n\equiv 1 \mod 2$ and $n\equiv 1 \mod 3$ implies $n\equiv 1 \mod 6$. On the other hand, when $n\equiv 2 \mod 6$, that is $n=6m+2$ for some $m\in \mathbb{Z}$, then 
	\begin{align*}
	  \lambda_1 &=  \frac{q^{6m+1}-1}{q^3-1}\frac{q^{6m-1}-1}{q^2-1}\frac{q^{6m-2}-1}{q-1}.
	\end{align*} 
	Hence, $\lambda_1\in \mathbb{Z}$ if and only if $\displaystyle \sum_{i=0}^{6m}q^i\sum_{j=0}^{6m-2}q^j\sum_{t=0}^{3m-2}q^{2t}$ is divisible by $q^2+q+1$. Since $\displaystyle\sum_{i=0}^{6m}q^i\sum_{j=0}^{6m-2}q^j \equiv q+1 \mod q^2+q+1$ and $\gcd(q+1,q^2+q+1)=1$, $\lambda_1\in \mathbb{Z}$ if and only if $\displaystyle \sum_{t=0}^{3m-2}q^{2t}$ is divisible by $q^2+q+1$, which yields a contradiction.  
Similarly,	when $n\equiv 5 \mod 6$, i.e. $n=6m+5$ for some $m\in \mathbb{Z}$ we have
\begin{align*}
 \lambda_{1}&=\frac{q^{6m+4}-1}{q^3-1}\frac{q^{6m+2}-1}{q^2-1}\frac{q^{6m+1}-1}{q-1}.   
\end{align*} Note that $\lambda_1\in \mathbb{Z}$ if and only if $\displaystyle \sum_{t=0}^{3m+1}q^{2t}\sum_{i=0}^{6m+1}q^i\sum_{j=0}^{6m}q^j \equiv 0 \mod q^2+q+1$. Since $\displaystyle \sum_{i=0}^{6m+1}q^i\sum_{j=0}^{6m}q^j\equiv q+1 \mod q^2+q+1$, and $\gcd(q+1, q^2+q+1)=1$, $\lambda_1\in \mathbb{Z}$ if and only if $\displaystyle\sum_{t=0}^{3m+1}q^{2t}$ is divisible by $q^2+q+1$, which yields a contradiction.
\end{proof}

Table~\ref{explicit_param13} shows the parameters that we obtain from the Steiner system $STS(13;q)$ and Corollary \ref{cor:STS}. {In particular, the normalized form of these parameters is $2$-$(13,4,5115;2)$. Of course the other two parameter sets are immediately implied by this one.} Table~\ref{explicit_param} summarizes the parameters of subspace designs whose existence would be implied by the existence of the $q$-Fano plane.

\begin{remark}\label{rem:newdesign}
	In the literature, the only known Steiner triple systems found are those with parameters $STS(13;2)$.
		The existence of such $STS(13;2)$ Steiner triple systems implies, via Corollary \ref{cor:STS}, the existence of new subspace designs with parameters as shown in Table~\ref{explicit_param13}.

		Moreover, for $q=2,3$ and $n=7$, Corollary \ref{cor:STS} shows that the existence of the $q$-Fano plane implies the existence of $2$-$(7,3,15;2)$ and $2$-$(7,3,40;3)$ designs (see also Table~\ref{explicit_param}). Designs with these parameters have actually been found \cite{braun2005some}. However, for $q\geq 4$, there is no information on the existence of designs with parameters $2$-$ (7,3,\frac{q^4-1}{q-1};q)$, which would arise from the $q$-Fano plano over $\mathbb{F}_q$. 
\end{remark} 

\begin{table}[h!]
	\begin{center}
		\caption{Parameters of the new designs in Corollary \ref{cor:STS} from an $STS(13;2)$.}\label{explicit_param13}
		\begin{tabular}{l|p{50mm}}
			\hline
			$q=2$ & $2$-$(13,4,692912;2)$ 
			\newline $2$-$(13,4,5115;2)$ 
			\newline $2$-$(13,9,6347715;2)$ 
			\newline  $2$-$(13,9,859903792;2)$ \\
			\hline
		\end{tabular}
	\end{center}
	
\end{table}

\begin{table}[h!]
	\begin{center}
		\caption{Parameters of the designs of Corollary \ref{cor:STS} from a putative $STS(7;q)$.}\label{explicit_param}
		\begin{tabular}{l|p{55mm}}
			\hline
			$q=2$ & $2$-$(7,4,80;2)$ \newline $2$-$(7,4,75;2)$ \newline $2$-$(7,3,15;2)$ \hfill\cite{braun2005some}\cite[Table 1]{braun2018q} 
			\newline  $2$-$(7,3,16;2)$
			\\
			\hline
			$q=3$ & $2$-$(7,4,810;3)$ 
			\newline $2$-$(7,4,400;3)$ 
			\newline $2$-$(7,3,40;3)$\hfill\cite{braun2005some}\cite[Table 2]{braun2018q} 
			\newline $2$-$(7,3,81;3)$
			\\
			\hline
			$q=4$ & $2$-$(7,4,4352;4)$ \newline $2$-$(7,4,1445;4)$ \newline $2$-$(7,3,85;4)$
			\newline $2$-$(7,3,256;4)$
			\\
			\hline
			$q=5$ & $2$-$(7,4,16250;5)$ \newline $2$-$(7,4,4056;5)$ \newline $2$-$(7,3,156;5)$ 
			\newline  $2$-$(7,3,625;5)$
			\\
			\hline
		\end{tabular}
	\end{center}
\end{table}

\begin{remark}
  Let $n,k,t$ be positive integers satisfying $n \geq k \geq t+1$ and suppose that the parameters $S(t,k,n;q)$ are admissable. One may ask the question as to whether this implies that the parameters 
  \begin{enumerate}
      \item[(1)] $t$-$\left(n,t+1,\qbin{k-t}{1}{q}\right)$ (Theorem \ref{design_circuits}),
      \item[(2)] $t$-$\displaystyle\left(n,t+2,q^{k-t}\qbin{n-k}{1}{q}\left( \qbin{n-t-1}{1}{q}-\qbin{k-t}{1}{q}\qbin{t+1}{1}{q}\right)\frac{1}{q+1} \right)$ (Theorem \ref{design_circuits-2}),
  \end{enumerate}
  are admissable. 
  In the case that $t=2$, from \cite{buratti+} we have that a $\mathcal{S}(2,k,n;q)$ Steiner system exists only if $n\equiv 1,k \mod k(k-1)$. 
  We have found by a computer check that for the case $t=2$, if $q\leq 11$, for $3\leq k \leq 20$, and $n\in \{ 1+ik(k-1),k +ik(k-1):1\leq i \leq 40\}$ then the parameters of (2) are admissable. 
  Similarly, for the cases $t=3,4$, $q \in \{2,3,4,5,7,8,9,11,13\}; 
  k \in \{t+2,...,15\} , n\in \{k+3,...,300\}$, we have found that the parameters of (2) are admissable whenever the $\mathcal{S}(2,k,n;q)$ parameters are admissable, while the converse does not hold.
\end{remark}
While the experimental evidence appears to suggest that the admissability of the parameters of an $\mathcal{S}(2,k,n;q)$ Steiner system implies the admissability of the circuit designs constructed in this paper, calculations to prove this are rather formidable. We give a proof for the case $t=2$ regarding the  $(t+1)$-dimensional circuit designs.
\begin{proposition}
    Let $n,k$ be positive integers satisfying $n \geq k \geq 3$. If the parameters $S(2,k,n;q)$ are admissable then the parameters $2$-$\left(n,3,\qbin{k-2}{1}{q}\right)$ are admissable.
\end{proposition}
\begin{proof}
   Suppose that the parameters $S(t,k,n;q)$ are admissable, so that
   $n\equiv 1,k \mod k(k-1)$.
   The parameters $2$-$\left(n,3,\qbin{k-2}{1}{q}\right)$ are admissable if and only if, for $i =0,1$ we have:
   \begin{equation*}\label{eq:ind}
   B_i:=\qbin{k-2}{1}{q}\qbin{n-i}{3-i}{q}\qbin{n-2}{1}{q}^{-1} 
   = \frac{q^{k-2}-1}{q-1}\prod_{j=i}^1  \frac{q^{n-j}-1}{q^{3-j}-1}\in \mathbb{Z}.
   \end{equation*}
   If $n-1$ is even then $q^2-1$ divides $q^{n-1}-1$. On the other hand, since $k-1$ divides $n-1$, if $n-1$ is odd then $k-2$ is even and so $q^2-1$ divides $q^{k-2}-1$. Clearly, in either case
   $B_1$ is a positive integer. Now consider 
   \[
   B_0=\frac{q^{k-2}-1}{q-1} \frac{q^{n}-1}{q^{3}-1}\frac{q^{n-1}-1}{q^{2}-1} =\frac{q^{n}-1}{q^{3}-1} B_1 .
   \] 
   If $k\equiv 0 \mod 3$ and $k \equiv 1 \mod 2$ then $(q^3-1)(q^2-1) | (q^k-1)(q^{k-1}-1)$, which divides $(q^n-1)(q^{n-1}-1)$ by the admissability of
   $S(t,k,n;q)$. Therefore, $B_0 \in {\mathbb Z}$. 
   If $k\equiv 0 \mod 3$ and $k \equiv 0 \mod 2$ then 
   $(q^3-1)(q^2-1) | (q^n-1)(q^{k-2}-1)$ and so again $B_0$ must be an integer.
   If $k \equiv 2 \mod 3$ then $(q^3-1) | (q^{k-2}-1)$ and clearly 
   $(q-1)(q^2-1)|(q^n-1)(q^{n-1}-1)$, so that $B_0 \in {\mathbb Z}$.
   Finally, suppose now that $k \equiv 1 \mod 3$, so that $n\equiv 1 \mod 3$. If $k\equiv 0 \mod 2$ then $(q^3-1)(q^2-1) | (q^{n-1}-1)(q^{k-2}-1)$ and so $B_0 \in {\mathbb Z}$. If $k \equiv 1 \mod 2$ we can consider the parameters of the supplementary design (i.e. the design of Theorem \ref{design_independent}), which are admissible if and only if
   \[
   C_0=\frac{q^{n-k}-1}{q-1} \frac{q^{n}-1}{q^{3}-1}\frac{q^{n-1}-1}{q^{2}-1} \text{ and }
   C_1=\frac{q^{n-k}-1}{q-1} \frac{q^{n-1}-1}{q^{2}-1} 
   \]
   are both positive integers. We have $n-k \equiv 0 \mod 3$, so that $C_0\in {\mathbb Z}$ and $n-k \equiv 0 \mod 2$, so that $C_1\in {\mathbb Z}$. It follows that the parameters of the supplementary design are admissable in the final case $k \equiv 1 \mod 3$ and  $k\equiv 1 \mod 2$.
\end{proof}

\subsection{The automorphism group of designs from $q$-PMDs}
For the subspace designs constructed in Theorems~\ref{design_independent},~\ref{design_circuits}, and~\ref{design_circuits-2}, we show that their automorphism groups are isomorphic to the automorphism group of the original Steiner system. Since the designs in Theorems \ref{design_independent} and \ref{design_circuits} are supplementary to each other and moreover the constructions of 
the circuits in Theorem \ref{design_circuits-2} are obtained by the independent spaces in Theorem \ref{design_independent}, we only consider the automorphism groups of the designs in Theorem 
\ref{design_independent} and~\ref{design_circuits-2}, respectively.

\begin{theorem}
	Let $\mathcal{S}$ be an $S(t,k,n; q)$ $q$-Steiner system. Then 
	\begin{enumerate} 
		\item The automorphism group of the subspace design obtained in Theorem~\ref{design_independent} from $\mathcal{S}$ is isomorphic to the automorphism group of $\mathcal{S}$.\label{autopart1}
		\item The automorphism group of the subspace design obtained in Theorem~\ref{design_circuits-2} from $\mathcal{S}$ is isomorphic to the automorphism group of $\mathcal{S}$.\label{autopart2}
	\end{enumerate}
\end{theorem}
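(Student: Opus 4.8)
The plan is to show that, as subgroups of $\mathrm{Aut}(\mathcal{L}(E))$, the automorphism group of each derived design coincides with $\mathrm{Aut}(\mathcal{S})$; this gives the asserted isomorphism a fortiori. Write $\mathcal{B}$ for the blocks of $\mathcal{S}$ and recall, from Theorems~\ref{design_independent} and \ref{design_circuits-2} together with Propositions~\ref{class_independent} and \ref{prop:circuits}, that the design $\mathcal{D}_1$ of Theorem~\ref{design_independent} has block set $\mathcal{I}$, the $(t+1)$-spaces contained in no member of $\mathcal{B}$, while the design $\mathcal{D}_2$ of Theorem~\ref{design_circuits-2} has block set $\mathcal{C}_{t+2}$, the $(t+2)$-spaces none of whose $(t+1)$-subspaces lies in a member of $\mathcal{B}$. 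I will use repeatedly that any $\phi\in\mathrm{Aut}(\mathcal{L}(E))$ preserves the height function of the subspace lattice, so that $\dim\phi(A)=\dim A$ for all $A\subseteq E$.

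For part~(\ref{autopart1}), the inclusion $\mathrm{Aut}(\mathcal{S})\subseteq\mathrm{Aut}(\mathcal{D}_1)$ is immediate: if $\phi(\mathcal{B})=\mathcal{B}$, then since $\mathcal{I}$ is described purely in terms of dimension and inclusion in members of $\mathcal{B}$ we get $\phi(\mathcal{I})=\mathcal{I}$. For the reverse inclusion the point is to recover $\mathcal{B}$ from $\mathcal{I}$ by lattice operations. Given $\phi\in\mathrm{Aut}(\mathcal{D}_1)$, the set $\mathcal{C}_{t+1}=\qbin{E}{t+1}{q}\setminus\mathcal{I}$ of $(t+1)$-spaces lying in some block is $\phi$-invariant. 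I then claim
$$\mathcal{B}=\Big\{\,W\in\qbin{E}{k}{q}\ :\ \text{every }(t+1)\text{-subspace of }W\text{ lies in }\mathcal{C}_{t+1}\,\Big\}.$$
The inclusion ``$\subseteq$'' is clear. For ``$\supseteq$'', let $W$ be such a $k$-space, fix a $t$-subspace $A\subseteq W$, and let $B_A$ be the unique block containing $A$. Every $(t+1)$-subspace $I$ of $W$ with $A\subseteq I$ lies in $\mathcal{C}_{t+1}$, hence in some block, which must equal $B_A$ because $A\subseteq I$; thus $I\subseteq B_A$. As these subspaces $I$ span $W$, we get $W\subseteq B_A$, and comparing dimensions yields $W=B_A\in\mathcal{B}$. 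Since this characterisation of $\mathcal{B}$ uses only dimension and $\mathcal{C}_{t+1}$, it follows that $\phi(\mathcal{B})=\mathcal{B}$, i.e.\ $\mathrm{Aut}(\mathcal{D}_1)\subseteq\mathrm{Aut}(\mathcal{S})$, whence equality.

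For part~(\ref{autopart2}), the inclusion $\mathrm{Aut}(\mathcal{S})\subseteq\mathrm{Aut}(\mathcal{D}_2)$ again follows at once from the lattice-theoretic description of $\mathcal{C}_{t+2}$ given above. For the reverse inclusion I reduce to part~(\ref{autopart1}) by recovering $\mathcal{I}$ from $\mathcal{C}_{t+2}$ via
$$\mathcal{I}=\Big\{\,I\in\qbin{E}{t+1}{q}\ :\ I\subseteq C\text{ for some }C\in\mathcal{C}_{t+2}\,\Big\}.$$
Here ``$\supseteq$'' is Proposition~\ref{prop:circuits} (the $(t+1)$-subspaces of a $(t+2)$-circuit are independent), while ``$\subseteq$'' is exactly the statement, established inside the proof of Theorem~\ref{design_circuits-2}, that every independent $(t+1)$-space extends to a $(t+2)$-circuit, i.e.\ that $\qbin{n-t-1}{1}{q}-\qbin{t+1}{1}{q}\qbin{k-t}{1}{q}>0$; this is equivalent to $\lambda_{\mathcal{C}_{t+2}}>0$, and so holds for every parameter set for which $\mathcal{D}_2$ is non-trivial. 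Hence $\mathcal{I}$ is obtained from $\mathcal{C}_{t+2}$ by lattice operations, so any $\phi\in\mathrm{Aut}(\mathcal{D}_2)$ satisfies $\phi(\mathcal{I})=\mathcal{I}$, i.e.\ $\phi\in\mathrm{Aut}(\mathcal{D}_1)=\mathrm{Aut}(\mathcal{S})$ by part~(\ref{autopart1}); combined with the easy inclusion this gives equality.

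The main obstacle is the reconstruction of $\mathcal{B}$ in part~(\ref{autopart1}): everything rests on the claim that a $k$-space is a block precisely when each of its $(t+1)$-subspaces lies in some block, and this is exactly where the uniqueness property of the $q$-Steiner system enters. A minor additional point, relevant only to part~(\ref{autopart2}), is to record that the count of extensions of an independent $(t+1)$-space to a $(t+2)$-circuit is strictly positive for the parameters under consideration, so that $\mathcal{I}$ is genuinely recoverable from $\mathcal{C}_{t+2}$.
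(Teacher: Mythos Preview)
Your proof is correct, and for part~(\ref{autopart2}) it follows essentially the same route as the paper: both reduce to part~(\ref{autopart1}) by observing that $\mathcal{I}$ is exactly the set of $(t+1)$-subspaces of members of $\mathcal{C}_{t+2}$, so an automorphism of $\mathcal{D}_2$ is automatically an automorphism of $\mathcal{D}_1$. You are in fact slightly more careful than the paper here, since you make explicit the nondegeneracy hypothesis $\lambda_{\mathcal{C}_{t+2}}>0$ needed to ensure every independent $(t+1)$-space really does extend to a $(t+2)$-circuit; the paper simply writes ``let $C\in\mathcal{C}_{t+2}$ such that $I\subseteq C$'' without comment.

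Where you genuinely diverge is in the reverse inclusion of part~(\ref{autopart1}). The paper argues by contradiction via a counting trick: assuming $\phi(B)$ is not a block for some $B\in\mathcal{B}$, it exhibits a $t$-space $\phi(A)\subseteq\phi(B)$ that is contained in $\lambda_{\mathcal{I}}+1$ independent $(t+1)$-spaces (the images $\phi(I_i)$ of the $\lambda_{\mathcal{I}}$ independent spaces through $A$, plus one more built inside $\phi(B)$), contradicting the design property of $\mathcal{D}_1$. Your argument instead reconstructs $\mathcal{B}$ intrinsically as the $k$-spaces all of whose $(t+1)$-subspaces lie in $\mathcal{C}_{t+1}=\qbin{E}{t+1}{q}\setminus\mathcal{I}$, and then invokes only that $\phi$ preserves dimension and $\mathcal{C}_{t+1}$. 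This is cleaner and more structural: it exhibits $\mathcal{B}$ as definable from $\mathcal{I}$ by lattice operations alone, so invariance is automatic and no counting is needed. The paper's approach, by contrast, leans on the specific numerical regularity of $\mathcal{D}_1$. Both hinge on the same underlying fact---uniqueness of the block through a $t$-space---but your formulation isolates it more transparently.
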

\begin{proof}
	Let $\mathcal{B}$ denote the blocks of $\mathcal{S}$. 
	Let $\mathcal{I}_{t+1}$ be the set of independent spaces of dimension $t+1$ and let $\mathcal{C}_{t+2}$ be the set of circuits of dimension $t+2$
	of the $q$-PMD arising from $\mathcal{S}$.
	
	\ref{autopart1}.  No member of $\mathcal{I}_{t+1}$ is contained in a block of $\mathcal{B}$. 
	Let $\phi$ be an automorphism of $\mathcal{S}$. Given an independent space $I\in \mathcal{I}_{t+1}$, we claim that the image $\phi(I)$ is also an independent space. 
	Since $\phi \in \mathrm {Aut}(E,{\mathcal {I}}_{t+1})$, then $\dim \phi(I)=t+1$. Moreover, $\phi(I)$ cannot be contained in a block $B$ of $\mathcal{B}$, because otherwise $I \subseteq \phi^{-1}(B)\in \mathcal{B}$, which is a contradiction. Therefore, $\phi(\mathcal{I}_{t+1}) = \mathcal{I}_{t+1}$ and so $\phi \in \rm{Aut(E,\mathcal{I}_{t+1})}$.

	Conversely, let $\phi$ be an automorphism of the subspace design with blocks $\mathcal{I}_{t+1}$. We will show that $\phi(B)\in \mathcal{B}$ for all $B\in \mathcal{B}$.
	Let $B \in \mathcal{B}$ and let $A$ be a $t$-dimensional subspace of $B$ such that $\phi(A)\neq A$. Note that if no such space exists, then $\phi(B)=B$, hence $\phi(B)$ is a block.  
	We will denote by $B_A=B$ the unique block containing $A$ and by $B_{\phi(A)}$ the unique block in $\mathcal{B}$ containing $\phi(A)$. Now assume that $\phi(B_A)$ is not a block, 
	which in particular implies that there exists a one-dimensional subspace $x\subseteq \phi(B_A)$, $x\nsubseteq B_{\phi(A)}$. By considering the independent space construction in Theorem \ref{design_independent}, we claim that the set $I_A=\phi(A)+x$ is independent, since it has dimension $t+1$ and is not contained in a block. Indeed, if it were contained in a block $B'\neq B_{\phi(A)}$, then $\phi(A)$ would be contained in 
	two different blocks and this would contradict the fact that $\mathcal{S}$ is a Steiner system. Finally, note that since $\mathcal{I}_{t+1}$ is the set of blocks of a subspace design, there are $\lambda_I$ independent spaces 
	$I_1, \ldots ,I_{\lambda_I}$ each of which contains $A$. It follows that for each $i$, $\phi(A)\subseteq \phi(I_i)$ and since each $I_i \nsubseteq B_A$, we have $\phi(I_i)\not \subseteq \phi(B_A)$. Since $I_A \subseteq \phi(B_A)$, it follows that $I_A$ is different from each of the 
	subspaces $I_i$. However, in that case $\phi(A)$ is contained in $\lambda_I+1$ independent subspaces in $\mathcal{I}_{t+1}$, yielding a contradiction. It follows that $\phi(B) \in \mathcal {B}$ for each $B \in \mathcal{B}$ and so the result follows.

	\noindent
	\ref{autopart2}. Let $\phi$ be an automorphism of $\mathcal{S}$. Let $C\in \mathcal{C}_{t+2}$. If $\phi(C)$ is not a circuit of dimension $t+2$, there exists a $(t+1)$-dimensional subspace $I'\subseteq \phi(C)$ that is contained in a block $B$ of $\mathcal{B}$. Then $\phi^{-1}(I')$ is a $(t+1)$-subspace of $C$ such that $\phi^{-1}(I')\subseteq \phi^{-1}(B)$. This contradicts the fact that $C$ is a circuit of dimension $t+2$. 
	It follows that $\phi$ is an automorphism of $(E,\mathcal{C}_{t+2})$.
	
	Conversely, let $\phi \in {\rm Aut}(E,\mathcal{C}_{t+2})$. We claim that $\phi$ is also an automorphism of $(E,\mathcal{I}_{t+1})$. Let $I \in \mathcal{I}_{t+1}$ and let $C \in \mathcal{C}_{t+2}$ such that $I \subseteq C$. Then $\phi(C) \in \mathcal{C}_{t+2}$ and so is a circuit of dimension $t+2$ that contains the $(t+1)$-dimensional space $\phi(I)$. It follows that $\phi(I)$ is independent and so $\phi(I) \in \mathcal{I}_{t+1}$. It now follows from \ref{autopart1} that $\phi $ is an automorphism of $\mathcal{S}$.
\end{proof}

\begin{remark}\label{rem:autgroup}
	Subspace designs with parameters $2$-$(7,3,15;2)$ and $2$-$(7,3,40;3)$ were found by computer search in \cite{braun2005systematic}, applying the Kramer-Mesner method and under the assumption that their automorphism groups contain a Singer cycle. In Table $1$, we see that subspace designs with the same parameters appear, with such designs arising from an $STS(7;2)$ $q$-Steiner triple system. However, the designs of \cite{braun2005systematic}
	could not be constructed by the methods of this paper, as then
	their automorphism groups would be isomorphic to that of the $q$-Fano plane, which has automorphism group of order at most $2$~\cite{BraunKier,KierKurz}. 
\end{remark}

\section{Acknowledgements}
{The authors are indebted to Ferdinand Ihringer for pointing out a counting error in Theorem \ref{design_circuits-2} in an earlier version of this paper, as well as to Ragnar Freij-Hollanti for his remarks on Proposition \ref{prop:classifysteinerflats} and Theorem \ref{thm:steinertoflats}. The authors wish to thank the anonymous referees for their helpful comments.}
This paper is the product of a collaboration that was initiated at the Women in Numbers Europe (WIN-E3) conference,
held in Rennes, August 26-30, 2019. The authors are very grateful to the organisers: Sorina Ionica, Holly Krieger, and Elisa Lorenzo Garc\'ia, for facilitating their participation at this workshop, which was supported by the Henri Lebesgue Center, the Association for Women in Mathematics (AWM) and the Clay Mathematics Institute (CMI). The fifth author is supported by the Swiss National Science Foundation grant n. 188430\footnote{E. Sa\c{c}\i kara was supported by the Swiss Confederation through the Swiss Government Excellence Scholarship no: 2019.0413 between September 2019 and August 2020.}. \\

\bibliographystyle{spmpsci}
\bibliography{References}

\end{document}